%
%
\documentclass[11pt, reqno]{amsart}

\usepackage{bbm,bm}
\usepackage{amssymb}

\usepackage{color}
\usepackage{pstricks}
\usepackage{graphicx} 

	\definecolor{myred}{rgb}{.5,.1,.1}
	\definecolor{mygreen}{rgb}{.1,.5,.1}
	\definecolor{myblue}{rgb}{.1,.1,.5}
	\definecolor{mycyan}{cmyk}{.6,0,0,0}
	\definecolor{mymagenta}{cmyk}{0,.6,0,0}

    \definecolor{matjaz}{rgb}{.2,.6,.2}
    \definecolor{aaron}{rgb}{.8,.2,.2}

\usepackage{ytableau}

\usepackage[bookmarks=false,colorlinks,linkcolor=myblue,citecolor=mygreen]{hyperref}

\usepackage{subfigure}
\usepackage[width=.85\textwidth,font=small,labelfont=sc]{caption}

\sloppy
\flushbottom

\pagestyle{headings} 

\addtolength{\textheight}{48pt}
\addtolength{\voffset}{-24pt}
\addtolength{\textwidth}{72pt}
\addtolength{\hoffset}{-36pt}



\PII{}
\copyrightinfo{}{}



\theoremstyle{plain}
\newtheorem{theorem}{Theorem}

\newtheorem{corollary}[theorem]{Corollary}
\newtheorem{lemma}[theorem]{Lemma}
\newtheorem{proposition}[theorem]{Proposition}

\theoremstyle{definition}

\theoremstyle{remark}
\newtheorem*{remark}{Remark} 


%

\newcommand{\demph}[1]{\textcolor{myblue}{\it #1}}

\def\qor{\quad\hbox{or}\quad}
\def\qand{\quad\hbox{and}\quad}

\def\field{\Bbbk}

\newcommand{\bQ}{\mathbb{Q}}
\newcommand{\bZ}{\mathbb{Z}}

\newcommand{\sym}[1][0]{\if0#1{\Lambda}\else{\Lambda[#1]}\fi} 


\def\harpoon{\rightharpoonup}

\def\innp(#1,#2){\left\langle#1,#2\right\rangle}
\def\innpt(#1,#2){\left\langle#1,#2\right\rangle_t}

\newcommand{\skewshape}[3][8]{%
	{\ytableausetup{aligntableaux=center,boxsize=#1pt}
	\ydiagram[*(gray)]{#3}
	*[*(white)]{#2}}
}



%
\newcommand{\qbinom}[3]{\genfrac{[}{]}{0pt}{}{#1}{#2}_{#3}} 

\newcommand{\m}[2]{m_{#1\!}\left(#2\right)} 

\DeclareMathOperator{\rib}{rib}
\DeclareMathOperator{\wt}{wt}
\DeclareMathOperator{\hgt}{ht}

\DeclareMathOperator{\hs}{hs}
\DeclareMathOperator{\vs}{vs}
\DeclareMathOperator{\br}{br}
\DeclareMathOperator{\sk}{sk}
\newcommand{\set}[1]{\{#1\}}


\author{Matja\v{z} Konvalinka}
\address[Konvalinka]{
	Department of Mathematics\\
	University of Ljubljana\\
	Jadranska 21\\
	1000 Ljubljana\\
	Slovenia
        }
\email{matjaz.konvalinka@gmail.com}
\urladdr{http://www.fmf.uni-lj.si/{\small$\sim$}konvalinka}
\thanks{Matja\v z Konvalinka was partially supported by Research Programs P1-0294 and P1-0297 of the Slovenian Research Agency}

\author{Aaron Lauve}
\address[Lauve]{
	Department of Mathematics and Statistics\\
	Loyola University Chicago\\
	1032 W. Sheridan Road\\
	Chicago, IL\, 60660\\
        USA/?
        }
\email{lauve@math.luc.edu}
\urladdr{http://www.math.luc.edu/{\small$\sim$}lauve}
\thanks{Aaron Lauve was supported in part by NSA grant \#H98230-10-1-0362.}

\title{Skew Pieri Rules for Hall--Littlewood Functions}

\keywords{Pieri Rules, Hall--Littlewood functions} 
\subjclass[2010]{05E05; 05E10; 16T05; 16T30; 33D52}

\begin{document}

\begin{abstract}
We produce skew Pieri Rules for Hall--Littlewood functions in the spirit of Assaf and McNamara \cite{AM}. The first two were conjectured by the first author \cite{K}. The key ingredients in the proofs are a $q$-binomial identity for skew partitions and a Hopf algebraic identity that expands products of skew elements in terms of the coproduct and the antipode. 
\end{abstract}

\maketitle

Let $\sym[t]$ denote the ring of symmetric functions over $\bQ(t)$, and let $\{s_\lambda \}$ and $\{P_\lambda(t) \}$ denote its bases of Schur functions and Hall--Littlewood functions, respectively, indexed by partitions $\lambda$. The Schur functions (which are actually defined over $\bZ$) lead a rich life---making appearances in combinatorics, representation theory, and Schubert calculus, among other places. See \cite{Fu,Mac} for details. The Hall--Littlewood functions are nearly as ubiquitous (having as a salient feature that $P_\lambda(t) \to s_\lambda$ under the specialization $t\to0$). See \cite{LLT} and the references therein for their place in the literature.

\medskip

A classical problem is to determine cancellation-free formulas for multiplication in these bases,
\[
	s_\lambda \, s_\mu \ = \ \sum_\nu c_{\lambda,\mu}^{\,\,\nu} \, s_\nu
\qand
	P_\lambda \, P_\mu \ = \ \sum_\nu f_{\lambda,\mu}^{\,\nu}(t) \, P_\nu .
\]
The first problem was only given a complete solution in the latter half of the 20th century, 
while the second problem remains open. Special cases of the problem, known as \emph{Pieri rules,} have been understood for quite a bit longer. 
\medskip

The Pieri rules for Schur functions \cite[Ch. I, (5.16) and (5.17)]{Mac} take the form
\begin{equation} \label{e: cpr}
 s_\lambda \, s_{1^r} =s_\lambda \, e_r = \sum_{\lambda^+} s_{\lambda^+}\,,
\end{equation}
with the sum over partitions $\lambda^+$ for which $\lambda^+/\lambda$ is a vertical strip of size $r$, and 
\begin{equation} \label{e: pr}
 s_\lambda \, s_r = \sum_{\lambda^+} s_{\lambda^+}\,,
\end{equation}
with the sum over partitions $\lambda^+$ for which $\lambda^+/\lambda$ is a horizontal strip of size $r$.
(See Section \ref{s: prelim} for the definitions of vertical- and horizontal strip.)

\medskip

The Pieri rules for Hall--Littlewood functions \cite[Ch. III, (3.2) and (5.7)]{Mac} state that
\begin{equation} \label{e: vs}
 P_\lambda \, P_{1^r} = P_\lambda \, e_r = \sum_{|\lambda^+/\lambda| = r} \vs_{\lambda^+/\lambda}(t) P_{\lambda^+}
\end{equation}
and
\begin{equation} \label{e: hs}
 P_\lambda \, q_r = \sum_{|\lambda^+/\lambda| = r} \hs_{\lambda^+/\lambda}(t) P_{\lambda^+} \,,
\end{equation}
with the sums again running over vertical strips and horizontal strips, respectively. Here $q_r$ denotes $(1-t)P_{r}$ for $r > 0$ with $q_0 = P_0 = 1$, and $\vs_{\lambda/\mu}(t)$, $\hs_{\lambda/\mu}(t)$ are certain polynomials in $t$. (See Section \ref{s: prelim} for their definitions, as well as those of $\sk_{\lambda/\mu}(t)$ and $\br_{\lambda/\mu}(t)$ appearing below.)

\medskip

In many respects (beyond the obvious similarity of \eqref{e: pr} and \eqref{e: hs}), the $q_r$ play the same role for Hall--Littlewood functions that the $s_r$ play for Schur functions. Still, one might ask for a link between the two theories. The following generalization of \eqref{e: pr}, which seems to be missing from the literature, is our first result (Section \ref{s: prelim}).

\begin{theorem} \label{thm: P.s}
For a partition $\lambda$ and $r\geq0$, we have
\begin{equation} \label{e: P.s}
	P_{\lambda} \, s_r = 
	\sum_{\lambda^+} \sk_{\lambda^+/\lambda}(t) P_{\lambda^+}\,,
\end{equation}
with the sum over partitions $\lambda^+ \supseteq \lambda$ for which $|\lambda^+/\lambda| = r$.
\end{theorem}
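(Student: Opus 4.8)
The plan is to exploit the classical identity $s_r=h_r$ together with a generating-function relation linking the $h_r$ to the $q_r$. Writing $H(u)=\sum_{r\ge0}h_ru^{r}=\prod_i(1-x_iu)^{-1}$ and $Q(u)=\sum_{r\ge0}q_ru^{r}=\prod_i\frac{1-tx_iu}{1-x_iu}$, one has $H(u)=Q(u)\,H(tu)$, so extracting the coefficient of $u^{r}$ gives the recursion
\[
 (1-t^{r})\,h_r \;=\; \sum_{s=0}^{r-1} t^{s}\,h_s\,q_{r-s}\qquad(r\ge1),
\]
with $h_0=q_0=1$. I would then prove \eqref{e: P.s} by induction on $r$; the base case $r=0$ is immediate, $s_0=1$ and $\sk_{\lambda/\lambda}(t)=1$.

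For the inductive step, multiply the recursion by $P_\lambda$, apply the inductive hypothesis to each $P_\lambda\,h_s$ (legitimate since $s<r$), and then apply the horizontal-strip Pieri rule \eqref{e: hs} to the trailing factor $q_{r-s}$. Fixing a target $\nu\supseteq\lambda$ with $|\nu/\lambda|=r$ and reading off the coefficient of $P_\nu$, one finds that \eqref{e: P.s} for this $r$ is \emph{equivalent} to the skew identity
\[
 \sum_{\lambda\subseteq\mu\subseteq\nu} t^{\,|\mu/\lambda|}\,\sk_{\mu/\lambda}(t)\,\hs_{\nu/\mu}(t)\;=\;\sk_{\nu/\lambda}(t),
\]
where the sum runs over all intermediate shapes $\mu$ (terms with $\nu/\mu$ not a horizontal strip drop out, $\hs$ being supported there), and the $\mu=\nu$ term, namely $t^{r}\sk_{\nu/\lambda}(t)$ since $\hs_{\nu/\nu}(t)=1$, is precisely what accounts for the factor $1-t^{r}$. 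I have checked this identity by hand for $\nu/\lambda=(2)/\varnothing$ and $(1,1)/\varnothing$.

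The main obstacle is establishing this skew identity in general; I expect it to be the ``$q$-binomial identity for skew partitions'' advertised in the abstract. The strategy is to substitute the explicit product formulas for $\hs_{\nu/\mu}(t)$ and $\sk_{\nu/\mu}(t)$ recorded in Section~\ref{s: prelim}. Each of these is a product of local factors attached to the distinct column lengths of the shape, so the requirement that $\mu$ interpolate between $\lambda$ and $\nu$ should decouple column by column, rewriting the whole sum as a product of one-variable sums; each such one-variable sum ought to be an instance of a $q$-Chu--Vandermonde convolution $\sum_{j}t^{\,e(j)}\qbinom{a}{j}{t}\qbinom{b}{c-j}{t}=\qbinom{a+b}{c}{t}$ for a suitable exponent $e(j)$. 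The delicate part is arranging the decoupling so that the chain bookkeeping $\lambda\subseteq\mu\subseteq\nu$ matches a genuine $q$-binomial convolution term by term; once the right indexing is found, each piece is routine.

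As a sanity check, specializing $t\to0$ turns $\sk$ and $\hs$ into indicator functions of horizontal strips (both reduce to the classical Pieri rule \eqref{e: pr}), and the displayed identity collapses to the obvious statement that, for each intermediate shape, a horizontal strip splits into two horizontal strips in at most one way. Alternatively, unrolling the recursion via $H(u)=\prod_{k\ge0}Q(t^{k}u)$ and iterating \eqref{e: hs} would present $\sk_{\nu/\lambda}(t)$ as a weighted sum over chains of horizontal strips $\lambda=\mu^{(0)}\subseteq\cdots\subseteq\mu^{(N)}=\nu$ with weight $\prod_{k}t^{(k-1)|\mu^{(k)}/\mu^{(k-1)}|}\hs_{\mu^{(k)}/\mu^{(k-1)}}(t)$, which one could verify directly against the definition of $\sk$; but this merely repackages the same $q$-binomial content.
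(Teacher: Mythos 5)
Your reduction is sound: the recursion $(1-t^r)h_r=\sum_{s=0}^{r-1}t^sh_sq_{r-s}$ follows correctly from $H(u)=Q(u)H(tu)$, and since $1-t^r$ is invertible in $\bQ(t)$ the induction does reduce Theorem \ref{thm: P.s} to the identity
\[
\sum_{\lambda\subseteq\mu\subseteq\nu}t^{|\mu/\lambda|}\sk_{\mu/\lambda}(t)\,\hs_{\nu/\mu}(t)=\sk_{\nu/\lambda}(t),
\]
which is indeed true (your two spot checks are correct, and it is forced by the theorem itself). The overall architecture parallels the paper's: there, $s_r$ is exchanged for $q_r$ via $q_r=\sum_k(-t)^ks_{r-k}e_k$ and the induction reduces to Lemma \ref{l: hs}, namely $\sum_\nu(-t)^{|\lambda/\nu|}\vs_{\lambda/\nu}(t)\sk_{\nu/\mu}(t)=\hs_{\lambda/\mu}(t)$ --- a convolution of $\vs$ against $\sk$ producing $\hs$, whereas yours convolves $\sk$ against $\hs$ to reproduce $\sk$. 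These are genuinely different identities, so your route is not a repackaging of the paper's.

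The gap is that you never prove your identity; everything after ``The main obstacle is establishing this skew identity'' is a plan, not an argument, and the plan underestimates the difficulty. The hoped-for clean column-by-column decoupling into single $q$-Chu--Vandermonde convolutions is not available as stated: the factor of $\hs_{\nu/\mu}(t)$ attached to column $j$ is conditioned on \emph{both} $\nu^c_j=\mu^c_j+1$ and $\nu^c_{j+1}=\mu^c_{j+1}$, and the $j$-th factor of $\sk_{\mu/\lambda}(t)$ involves $\lambda^c_{j+1}$, so adjacent columns remain coupled, and the intermediate $\mu$ must itself be a partition, which further constrains the per-column choices. For comparison, the paper's Lemma \ref{l: hs} --- the analogous convolution --- does achieve independence of the column choices, but the resulting one-column sums still require a multi-case analysis (horizontal strip versus not, with subcases on $\lambda^c_{j+1}$ versus $\mu^c_j$) and repeated use of the $q$-binomial theorem \eqref{qbt} at the special substitutions $t\mapsto-t$ and $t\mapsto-1$ to force vanishing; nothing there is a single routine convolution. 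Until you supply an argument of comparable substance for your $\sk$--$\hs$ identity (or derive it from something already established, e.g.\ by combining Lemma \ref{l: hs} with \eqref{qr}), the proof is incomplete at exactly the point where all the work lies.
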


\medskip

The main focus of this article is on the generalizations of Hall--Littlewood functions to skew shapes ${\lambda/\mu}$. Our specific question about skew Hall--Littlewood functions is best introduced via the recent answer for skew Schur functions $s_{\lambda/\mu}$. In \cite{AM}, Assaf and McNamara give a \demph{skew Pieri rule} for Schur functions. They prove (bijectively) the following generalization of \eqref{e: pr}:
\begin{equation} \label{e: spr}
 s_{\lambda/\mu} \, s_r = \sum_{\lambda^+,\,\mu^-} (-1)^{|\mu/\mu^-|} s_{\lambda^+/\mu^-}\,,
\end{equation}
with the sum over pairs $(\lambda^+,\mu^-)$ of partitions such that $\lambda^+/\lambda$ is a horizontal strip, $\mu/\mu^-$ is a vertical strip, and $|\lambda^+/\lambda|+|\mu/\mu^-| = r$. This elegant gluing-together of an $s_r$-type Pieri rule for the outer rim of $\lambda/\mu$ with an $e_{r}$-type Pieri rule for the inner rim of $\lambda/\mu$ demanded further exploration. 

\medskip

Before we survey the literature that followed the Assaf--McNamara result, we call attention to some work that preceded it. The skew Schur functions do not form a basis; so, from a strictly ring theoretic perspective (or representation theoretic, or geometric), it is more natural to ask how the product in \eqref{e: spr} expands in terms of Schur functions. This answer, and vast generalizations of it, was provided by Zelevinsky in \cite{Zel:1981}. In fact, \eqref{e: spr} provides such an answer as well, since 
\[
	s_{\lambda^+/\mu^-} = \sum_\nu c_{\mu^-,\nu}^{\lambda^+} \, s_\nu
\]
and the coefficients $c_{\mu^-,\nu}^{\,\lambda^+}$ are well-understood, but the resulting formula has an enormous amount of cancellation, while Zelevinsky's is cancellation free. It is an open problem to find a representation theoretic (or geometric) explanation of \eqref{e: spr}. 

\begin{remark}
As an example of the type of explanation we mean, recall Zelevinsky's realization \cite{Zel:1987} of the classical Jacobi--Trudi formula for $s_\lambda$ ($\lambda \vdash n$) from the resolution of a well-chosen polynomial representation of $\mathrm{GL}_n$. See also \cite{Akin,Doty}. 
\end{remark}

Returning to the literature that followed \cite{AM}, Lam, Sottile, and the second author \cite{LLS} found a Hopf algebraic explanation for \eqref{e: spr} that readily extended to many other settings. A skew Pieri rule for $k$-Schur functions was given, for instance, as well one for (noncommutative) ribbon Schur functions. Within the setting of Schur functions, it provided an easy extension of \eqref{e: spr} to products of arbitrary skew Schur functions---a formula first conjectured by Assaf and McNamara in \cite{AM}. (The results of this paper use the same Hopf machinery. For the non-experts, we reprise most of details and background in Section \ref{s: Hopf}.) 

\medskip

Around the same time, the first author \cite{K} was motivated to give a skew Murnaghan-Nakayama rule in the spirit of Assaf and McNamara. Along the way, he gives a bijective proof of the conjugate form of \eqref{e: spr} (only proven in \cite{AM} using the automorphism $\omega$) and a \emph{quantum} skew Murnaghan-Nakayama rule that takes the following form.
\begin{equation} \label{e: sqmnr}
 s_{\lambda/\mu} \, q_r = \sum_{\lambda^+,\mu^-} (-1)^{|\mu/\mu^-|} \br_{\lambda^+/\lambda}(t) \br_{(\mu/\mu^-)^c}(t) s_{\lambda^+/\mu^-}\,,
\end{equation}
with the sum over pairs $(\lambda^+,\mu^-)$ of partitions such that $\lambda^+/\lambda$ and $\mu/\mu^-$ are broken ribbons and $|\lambda^+/\lambda|+|\mu/\mu^-| = r$. Note that since $P_r(0) = s_r$, we recover the skew Pieri rule for $t = 0$. Also, since $P_r(1) = p_r$ (the $r$-th power sum symmetric function), we recover the skew Murnaghan-Nakayama rule \cite{AM2} if we divide the formula by $1-t$ and let $t \to 1$. 
This formula, like that in Theorem \ref{thm: P.s}, may be viewed as a link between the two theories of Schur and Hall--Littlewood functions. One is tempted to ask for other examples of mixing, e.g., swapping the rolls of Schur and Hall--Littlewood functions in \eqref{e: sqmnr}. Two such examples were found (conjecturally) in \cite{K}. Their proofs, and a generalization of \eqref{e: spr} to the Hall--Littlewood setting, are the main results of this paper.

\begin{theorem} \label{thm: e-Pieri}
For partitions $\lambda,\mu$, $\mu\subseteq \lambda$, and $r\geq0$, we have
\[
	P_{\lambda/\mu} \, s_{1^r} = P_{\lambda/\mu} \, e_r = P_{\lambda/\mu} \, P_{1^r} = 
	\sum_{\lambda^+,\mu^-} {(-1)^{|\mu/\mu^-|}} \vs_{\lambda^+/\lambda}(t) \sk_{\mu/\mu^-}(t)\, P_{\lambda^+/\mu^-}\,,
\]
where the sum on the right is over all $\lambda^+ \supseteq \lambda$, $\mu^-\subseteq \mu$ such that $|\lambda^+/\lambda| + |\mu/\mu^-| = r$.
\end{theorem}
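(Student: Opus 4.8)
The plan is to prove this via the Hopf-algebraic machinery of \cite{LLS}, exactly as advertised in the introduction, specializing the general ``skew product'' identity to the basis of Hall--Littlewood functions. First I would recall the key identity: in the Hopf algebra $\sym$, writing $S$ for the antipode and using Sweedler notation for the coproduct $\Delta(h) = \sum h_{(1)} \otimes h_{(2)}$, one has a formula expressing a product $b_{\lambda/\mu} \cdot c$ of a skew element with an arbitrary element $c$ in terms of $\sum (\text{something involving } S(h_{(2)}))$ and ordinary (non-skew) products. Concretely, skew Hall--Littlewood functions satisfy $P_{\lambda/\mu} = \sum_\nu \langle P_\lambda , P_\mu \, Q_\nu\rangle\, P_\nu / (\text{normalizations})$, or more usefully $P_{\lambda/\mu}$ is characterized by $\langle P_{\lambda/\mu}, f\rangle = \langle P_\lambda, P_\mu f\rangle$ under the Hall--Littlewood inner product (with $Q$ the dual basis). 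The point of the $\sym$-comodule structure is that $P_{\lambda/\mu} = \sum_{\rho} (\text{Littlewood--Richardson-type coefficient}) P_\rho$ is the image of $P_\lambda$ under the map dual to multiplication by $P_\mu$, and skewing is a coaction.

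The main steps, in order, would be: (1) State the abstract identity from \cite{LLS}: for a graded connected Hopf algebra that is a polynomial algebra with a self-dual (or dual) pairing, and for the ``skewing'' operators $f^\perp$, one has $ (f^\perp g)\cdot h = \sum f^\perp\!\big( g \cdot (S(h_{(1)})^\perp \,\text{-twisted term})\big)$; more precisely the identity that expands $P_{\lambda/\mu}\, c$ as $\sum \pm\, P_{\lambda^+/\mu^-}$-type sums comes from combining the coproduct of $c$ with the antipode acting on the inner partition. (2) Apply the ordinary Hall--Littlewood Pieri rule \eqref{e: vs} for multiplication by $e_r = P_{1^r}$ on the ``outer'' part, which introduces the factor $\vs_{\lambda^+/\lambda}(t)$ and the vertical-strip condition. (3) Identify the ``inner'' contribution: the antipode of $e_r$ in $\sym$ is $S(e_r) = (-1)^r h_r$ (up to the standard sign), and one must compute how $h_r^\perp$ (equivalently, multiplication by $h_r$ in the appropriate pairing, i.e.\ the $s_r$-type operation on the inner shape) acts. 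This is exactly where Theorem~\ref{thm: P.s} enters: the $s_r$-Pieri rule for $P$'s, proved earlier, gives the coefficient $\sk_{\mu/\mu^-}(t)$ and forces $\mu/\mu^-$ to have size contributing to the total $r$. The signs $(-1)^{|\mu/\mu^-|}$ come from the antipode. (4) Bookkeeping: the coproduct $\Delta(e_r) = \sum_{i+j=r} e_i \otimes e_j$ (respectively $\Delta(h_r)$) splits $r$ into the outer part $|\lambda^+/\lambda|$ and the inner part $|\mu/\mu^-|$, yielding precisely the index set $|\lambda^+/\lambda| + |\mu/\mu^-| = r$.

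Actually, a cleaner route: invoke the single master identity of \cite{LLS} that says $b^\perp(a)\cdot c = \sum (b^\perp)\big(a \cdot (\text{first tensor factor of }\Delta c)\big)\cdot S(\text{second factor})$ — rather, the precise statement there is an identity of the form $f_{\lambda/\mu}\, c = \sum_{(c)} \big(f_\lambda \cdot c_{(1)}\big)\big/\!\!\big/\, \overline{c_{(2)}}$ where $\overline{\,\cdot\,}$ denotes skewing by the antipode. One substitutes $c = e_r$, uses $\Delta(e_r)=\sum e_i\otimes e_j$ and $S(e_j) = (-1)^j h_j$, then expands $P_\lambda \cdot e_i$ by \eqref{e: vs} and the skew-by-$h_j$ operation on $P_\mu$ by (the $Q$-dual of) Theorem~\ref{thm: P.s}. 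Since $e_r = P_{1^r}$ and also (by a separate, easy check at the level of the $\langle\cdot,\cdot\rangle_t$ pairing or via $e_r = (1-t)\cdots$ telescoping) $e_r$ acts the same as $s_{1^r}$ here, the three products on the left coincide.

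The hard part will be step (3)/the inner term: getting the \emph{precise} polynomial coefficient $\sk_{\mu/\mu^-}(t)$ (as opposed to merely ``some polynomial'') and the exact sign, which requires carefully matching the antipode action $S(e_j) = (-1)^j h_j$ with the dual of Theorem~\ref{thm: P.s} under the Hall--Littlewood pairing, since the pairing is \emph{not} self-dual ($\langle P_\lambda, Q_\mu\rangle_t = \delta_{\lambda\mu}$, with $Q_\mu = b_\mu(t) P_\mu$). One must track the $b_\mu(t)$ normalization factors through the skewing operation so they cancel correctly and leave exactly $\sk_{\mu/\mu^-}(t)$; a subsidiary lemma computing $h_r^\perp P_\mu$ (or $e_r^\perp$) in the $P$-basis, or equivalently the coefficients in $P_\mu \cdot s_r$ from Theorem~\ref{thm: P.s} transported via duality, will be needed. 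The rest is the formal Hopf-algebra manipulation plus the two known Pieri rules, which should be routine given the framework already set up in Section~\ref{s: Hopf}.
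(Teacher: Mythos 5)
Your proposal is correct and follows essentially the same route as the paper: specialize the Hopf identity $(a\harpoon g)\cdot h=\sum\bigl(S(h'')\harpoon a\bigr)\harpoon(g\cdot h')$ to $h=e_r$, use $\Delta(e_r)=\sum_k e_k\otimes e_{r-k}$ and $S(e_k)=(-1)^k s_k$, apply the vertical-strip rule \eqref{e: vs} on the outer rim, and extract the inner coefficient $\sk_{\mu/\mu^-}(t)$ from (the dual of) Theorem~\ref{thm: P.s} via Corollary~\ref{cor: P.s}. The normalization issue you flag as the hard part is handled painlessly in the paper by Lemma~\ref{l: self-dual} together with writing $P_{\lambda/\mu}=Q_\mu\harpoon P_\lambda$, so that the inner computation stays entirely in terms of $Q_{\mu/\tau}$ and the structure constants $f_{\mu^-,\tau}^{\,\mu}(t)$; also note $e_r=s_{1^r}=P_{1^r}$ are literally the same element of $\sym[t]$, so no separate check is needed for the three left-hand sides.
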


\begin{theorem} \label{thm: s-Pieri}
For partitions $\lambda,\mu$, $\mu\subseteq \lambda$, and $r\geq0$, we have
\[
	P_{\lambda/\mu} \, s_r = 
	\sum_{\lambda^+,\mu^-} {(-1)^{|\mu/\mu^-|}} \sk_{\lambda^+/\lambda}(t) \vs_{\mu/\mu^-}(t)\, P_{\lambda^+/\mu^-}\,,
\]
where the sum on the right is over all $\lambda^+ \supseteq \lambda$, $\mu^-\subseteq \mu$ such that $|\lambda^+/\lambda| + |\mu/\mu^-| = r$.
\end{theorem}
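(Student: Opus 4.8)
The plan is to deduce Theorem~\ref{thm: s-Pieri} from the same Hopf-algebraic machinery used for the other results, exploiting the duality between $s_r$ and $q_r$ under the Hall inner product. Recall that $\sym[t]$ is a self-dual Hopf algebra, and that the skew element $P_{\lambda/\mu}$ is obtained from $P_\lambda$ by applying the ``skewing'' operator: $P_{\lambda/\mu} = \mu^\perp P_\lambda$ where $\mu^\perp$ is adjoint to multiplication by $P_\mu$, with respect to $\innp(\cdot,\cdot)_t$. Since $\{P_\lambda\}$ and $\{Q_\lambda\}$ are dual bases, the coproduct of $P_\lambda$ is $\Delta P_\lambda = \sum_\mu P_{\lambda/\mu} \otimes Q_\mu$ (up to the usual normalization), and this is the engine behind all skew-Pieri phenomena. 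The core identity from \cite{LLS} expresses a product $a\cdot b$ of a skew element with an arbitrary element in terms of $\Delta$, the antipode $S$, and multiplication; specializing $b$ appropriately and pairing against a test function is what produces each skew Pieri rule.

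First I would set up the precise Hopf identity: for $f$ skew (meaning $f = h^\perp g$ for the relevant comultiplicative structure) and any $a$, one has $f\cdot a = \sum (f_{(1)}\cdot a_{(2)})\, \langle \text{antipode of } a_{(1)}, \text{something}\rangle$ — more concretely, the identity from \cite{LLS} states that multiplying a skew Schur-type function by $a$ unfolds into a sum over the coproduct of $a$ with an antipode applied to the ``inner'' piece. Applied to $P_{\lambda/\mu}\cdot s_r$, write $P_{\lambda/\mu} = \mu^\perp P_\lambda$, use $s_r = \sum_j (-1)^j e_{?}\cdots$ — actually, the cleaner route: expand using $\Delta(P_\lambda)$ and the fact that $\langle P_{\lambda/\mu}, \cdot\rangle$-type manipulations convert multiplication by $s_r$ on the outside into the $\sk$-Pieri coefficients (Theorem~\ref{thm: P.s}) and multiplication by $S(s_r)$-components on the inside into the $\vs$-coefficients (via \eqref{e: vs}, since the antipode sends the $s_r$-Pieri data to $e$-type data with signs). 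The signs $(-1)^{|\mu/\mu^-|}$ arise precisely from the antipode $S$, which on $\sym[t]$ acts as $\omega$ composed with a sign, or more directly satisfies $S(e_k) = (-1)^k h_k$-type relations lifted to Hall--Littlewood bases.

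The key steps in order: (1) state and cite the abstract Hopf identity from \cite{LLS} in the form ``$(\text{skew elt})\cdot a = \sum \pm (\text{outer skew}) \cdot (\text{inner-skewed by } a_{(i)})$''; (2) identify the outer contribution: multiplying $P_\lambda$-type data by $s_r$ gives $\sum_{\lambda^+}\sk_{\lambda^+/\lambda}(t)$ by Theorem~\ref{thm: P.s}; (3) identify the inner contribution: applying the antipode to the $s$-Pieri structure and then the $\mu^-$-skewing produces $\sum_{\mu^-}(-1)^{|\mu/\mu^-|}\vs_{\mu/\mu^-}(t)$, using \eqref{e: vs} together with the antipode formula $S\!\left(\sum_r s_r\right)$-expansion; (4) combine, track the degree constraint $|\lambda^+/\lambda|+|\mu/\mu^-|=r$, and verify the coefficient bookkeeping gives exactly $(-1)^{|\mu/\mu^-|}\sk_{\lambda^+/\lambda}(t)\,\vs_{\mu/\mu^-}(t)$. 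A sanity check at $t=0$ should recover \eqref{e: spr}, since $\sk_{\nu/\kappa}(0)$ counts horizontal strips and $\vs_{\nu/\kappa}(0)$ counts vertical strips.

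The main obstacle I anticipate is step (3): correctly computing the action of the antipode of $\sym[t]$ on the relevant $s_r$ (or its coproduct components) and showing it produces the \emph{vertical}-strip coefficients $\vs_{\mu/\mu^-}(t)$ with the correct sign, rather than some other polynomial. Unlike the Schur case where $S = \pm\omega$ interchanges $h_r \leftrightarrow e_r$ cleanly, here we are mixing two bases: $s_r$ lives naturally in the Schur world while the $\vs$ coefficients come from the $e_r$-Pieri rule \eqref{e: vs} for Hall--Littlewood functions, and the antipode must be shown to mediate exactly this exchange. Verifying this compatibility — essentially that $\omega$ applied to the $s_r$-Pieri rule for $P_\lambda$ (Theorem~\ref{thm: P.s}) yields the $e_r$-Pieri rule \eqref{e: vs} after the appropriate $\omega$-twist on Hall--Littlewood functions, which involves the known relation between $P_\lambda(t)$ and $Q_{\lambda'}(t)$ under $\omega$ — is where the real content lies; the rest is formal Hopf-algebra manipulation parallel to \cite{LLS} and the proof of Theorem~\ref{thm: e-Pieri}.
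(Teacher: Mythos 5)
Your proposal follows essentially the same route as the paper's proof: apply the Hopf identity \eqref{e: Hopf skew rule} with $h=s_r$, use $\Delta(s_r)=\sum_k s_k\otimes s_{r-k}$ and $S(s_k)=(-1)^k e_k$, feed the outer factor into Theorem \ref{thm: P.s} (producing $\sk_{\lambda^+/\lambda}(t)$) and the inner factor into the vertical-strip rule \eqref{e: vs} (producing $(-1)^{|\mu/\mu^-|}\vs_{\mu/\mu^-}(t)$). The obstacle you anticipate in step (3) does not materialize: the antipode is only ever applied to the Schur-world element $s_k$, where $S(s_k)=(-1)^k e_k$ is immediate, and then the identification $e_k=P_{1^k}$ lets \eqref{e: vs}, dualized via \eqref{e: skew-via-duality} to give $e_k\harpoon Q_\mu=\sum_{|\mu/\mu^-|=k}\vs_{\mu/\mu^-}(t)\,Q_{\mu^-}$, deliver the coefficients directly, with no $\omega$-twisting of Hall--Littlewood bases required.
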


Note that putting $\mu=\emptyset$ above recovers Theorem \ref{thm: P.s}. 
(We offer two proofs of Theorem \ref{thm: s-Pieri}; one that rests on Theorem \ref{thm: P.s} and one that does not.)

\begin{theorem} \label{thm: q-Pieri}
For partitions $\lambda,\mu$, $\mu\subseteq \lambda$, and $r\geq0$, we have
\[
	P_{\lambda/\mu} \, q_r = 	\sum_{\lambda^+,\mu^-,\nu}  (-1)^{|\mu/\mu^-|} (-t)^{|\nu/\mu^-|} \hs_{\lambda^+/\lambda}(t)\vs_{\mu/\nu}(t)\, \sk_{\nu/\mu^-}(t) \, P_{\lambda^+/\mu^-}\,,
\]
where the sum on the right is over all $\lambda^+ \supseteq \lambda$, $\mu^-\subseteq \nu \subseteq \mu$ such that $|\lambda^+/\lambda| + |\mu/\mu^-| = r$.
\end{theorem}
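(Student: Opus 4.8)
The plan is to follow the Hopf-algebraic strategy that (as the introduction promises in Section \ref{s: Hopf}) expresses a product of skew elements $P_{\lambda/\mu}\cdot x$ in terms of the coproduct of $x$ and the antipode $S$ applied to the inner shape. Concretely, writing $x = q_r$, the master identity should read something like
\[
	P_{\lambda/\mu}\, q_r \;=\; \sum \big( P_{\lambda/\cdot} \cdot (\text{first tensor factor of } \Delta q_r) \big)\cdot S\big(\cdots \text{applied to }\mu\big),
\]
so the problem reduces to two ingredients: (i) the non-skew Pieri-type expansions for $q_r$ against $P_\lambda$ on the \emph{outer} rim, and (ii) the expansion of $S$ acting on a skew Hall--Littlewood function indexed by the \emph{inner} shape, together with knowing how $\Delta q_r$ decomposes. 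For (i) we already have \eqref{e: hs}: multiplying by $q_j$ adds a horizontal strip with coefficient $\hs$. For the inner/antipode side we will need the analogue of what \eqref{e: P.s}/Theorem \ref{thm: P.s} and \eqref{e: vs} give; this is exactly where the three-fold sum over $\mu^- \subseteq \nu \subseteq \mu$ and the sign/power weights $(-1)^{|\mu/\mu^-|}(-t)^{|\nu/\mu^-|}$ will be produced.

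First I would recall (or establish) the coproduct formula $\Delta q_r = \sum_{i+j=r} q_i \otimes q_j$ — this is the standard fact that the $q_r$ generate a divided-power/binomial-type sub-bialgebra, and it is what makes $q_r$ the "right" element to feed into the machine. Next I would apply the Hopf identity from Section \ref{s: Hopf} to $P_{\lambda/\mu}\,q_r$: the outer shape $\lambda$ gets multiplied by $q_j$, yielding $\sum \hs_{\lambda^+/\lambda}(t) P_{\lambda^+/\mu}$-type terms via \eqref{e: hs}, while the inner shape $\mu$ gets hit by $S(q_i)$. The third key step is therefore to compute the action of $S$ on the inner side. Here I expect the antipode of $q_i$, when paired against $P_\mu$ in the skew construction, to unfold in \emph{two} stages: the antipode on the Hall--Littlewood side naturally involves both a vertical-strip move (an $\vs$ contribution, shrinking $\mu$ to some intermediate $\nu$) and a Schur-type/$\sk$ move (shrinking $\nu$ down to $\mu^-$), with the $(-1)$'s tracking the antipode's sign and the extra $(-t)^{|\nu/\mu^-|}$ coming from the interaction of $S$ with the $q$-deformation (essentially $S(q_i)$ expressed back in the $P$-basis or via $e$'s and $P$'s carries such a power of $-t$). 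Assembling the outer contribution $\hs_{\lambda^+/\lambda}$, the two inner contributions $\vs_{\mu/\nu}$ and $\sk_{\nu/\mu^-}$, and the weights, with the size constraint $i+j=r$ becoming $|\lambda^+/\lambda| + |\mu/\mu^-| = r$, gives precisely the claimed formula.

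The main obstacle I anticipate is step three: correctly identifying the expansion of the antipode on the inner rim so that it reproduces \emph{both} $\vs_{\mu/\nu}(t)\,\sk_{\nu/\mu^-}(t)$ and the precise power $(-t)^{|\nu/\mu^-|}$. Two sub-issues make this delicate. First, one must untangle how $S$ interacts with the definition of $q_r = (1-t)P_r$ versus with $e_r$ and $P_{1^r}$ — the fact that Theorem \ref{thm: e-Pieri} has a clean two-term inner sum while Theorem \ref{thm: q-Pieri} has a three-term sum suggests the $q$-normalization is exactly what splits the inner move into a $\vs$-step followed by a $\sk$-step. I would try to prove the needed inner identity as a stand-alone lemma, probably by the same $q$-binomial identity for skew partitions advertised in the abstract, checking it first in the non-skew case ($\mu$ a single row or $\mu=\nu$) and then bootstrapping. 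Second, I must be careful that the various strip-type coefficients are being evaluated on the correct skew shapes (outer rim of $\lambda^+/\lambda$, the nested shapes $\mu/\nu$ and $\nu/\mu^-$) and that no shape-validity constraint has been dropped. A useful sanity check at the end: setting $t=0$ should collapse $q_r \to s_r$, force $\nu = \mu^-$ (killing the $(-t)^{|\nu/\mu^-|}$ unless the exponent is $0$), turn $\hs \to$ horizontal-strip indicator and $\vs \to$ vertical-strip indicator, $\sk \to$ the appropriate indicator, and thereby recover the Assaf--McNamara rule \eqref{e: spr}; setting $\mu = \emptyset$ should recover \eqref{e: hs}; and dividing by $1-t$ and letting $t\to 1$ should recover a power-sum skew rule. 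I would verify all three specializations explicitly as confirmation that the coefficients and signs have been assembled correctly.
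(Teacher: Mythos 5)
Your architecture matches the paper's first proof exactly: apply the Hopf identity \eqref{e: Hopf skew rule} with $h=q_r$, use $\Delta(q_r)=\sum_k q_k\otimes q_{r-k}$ and \eqref{e: hs} on the outer rim, and push $S(q_k)$ onto the inner shape. But the step you yourself flag as the ``main obstacle'' is the actual content of the theorem, and your proposal does not close it. Two concrete ingredients are missing. First, you need the antipode of $q_k$ \emph{explicitly in the $P$-basis}, $S(q_k)=\sum_{\tau\vdash k}c_\tau(t)P_\tau$ with $c_\tau(t)=t^{\sum_{i\ge2}\binom{\tau_i^c+1}{2}}\prod_{i=1}^{\ell(\tau)}(-1+t^i)$; the paper obtains this (Lemma \ref{l: omega}) from the Lascoux--Sch\"utzenberger charge formula for $K_{(r-k,1^k),\lambda}(t)$ plus the $q$-binomial theorem, and nothing in your sketch produces these coefficients. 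Second, once you write $S(q_k)\harpoon Q_\mu=\sum_\tau c_\tau(t)Q_{\mu/\tau}$ and expand in the $Q_{\mu^-}$ basis, you face the identity
\[
\sum_{\sigma} c_\sigma(t)\, f^{\mu}_{\sigma,\mu^-}(t)\;=\;\sum_{\nu}(-1)^{|\mu/\nu|}\,t^{|\nu/\mu^-|}\,\vs_{\mu/\nu}(t)\,\sk_{\nu/\mu^-}(t),
\]
which is what manufactures the three-fold sum and the weight $(-t)^{|\nu/\mu^-|}$. The paper proves this by first establishing the generating-function identity of Theorem \ref{thm: y} (itself resting on Macdonald's $\sum_m e_m y^m$ expansion and Theorem \ref{thm: P.s}) and then specializing $y=-1/t$. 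Your plan to ``prove the needed inner identity \ldots by the same $q$-binomial identity \ldots and then bootstrapping'' is a hope, not an argument: Lemma \ref{l: hs} alone does not give this identity (it corresponds to the specialization $y=-t$, not $y=-1/t$).

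Had you wanted to avoid the antipode computation entirely, there is a genuinely easier route (the paper's second proof): write $q_r=\sum_{k}(-t)^k s_{r-k}e_k$ via \eqref{qr}, apply Theorem \ref{thm: s-Pieri} and then Theorem \ref{thm: e-Pieri} to $P_{\lambda/\mu}s_{r-k}e_k$, and collapse the resulting double sum on the outer rim using Lemma \ref{l: hs} to produce $\hs_{\lambda^+/\lambda}(t)$, leaving exactly the inner triple sum of the statement. Your sanity checks at $t=0$, $\mu=\emptyset$, and $t\to1$ are correct and worth keeping, but they verify the formula rather than prove it.
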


\begin{remark}
We reiterate that the skew elements do not form a basis for $\sym[t]$, so the expansions announced in Theorems \ref{thm: e-Pieri}--\ref{thm: q-Pieri} are by no means unique. However, if we demand that the expansions be over partitions $\lambda^+\supseteq \lambda$ and $\mu^-\subseteq \mu$, and that the coefficients factor nicely as products of polynomials $a_{\lambda^+/\lambda}(t)$ (independent of $\mu$) and $b_{\mu/\mu^-}(t)$ (independent of $\lambda$), then they are in fact unique (up to scalar). We make this remark precise in Theorem \ref{thm: unique} in Section \ref{s: skew proofs}. 
\end{remark}

This paper is organized as follows. In Section \ref{s: prelim}, we prove some polynomial identities involving $\hs$, $\vs$ and $\sk$, prove Theorem \ref{thm: P.s}, and find $\omega(q_r)$. In Section \ref{s: Hopf}, we introduce our main tool, Hopf algebras. We conclude in Section \ref{s: skew proofs} with the proofs of our main theorems.

\section{Combinatorial Preliminaries}\label{s: prelim}

\subsection{Notation, and a key lemma} \label{s: key lemma}

The conjugate partition
of $\lambda$ is denoted $\lambda^c$. We write $\m{i}{\lambda}$ for the number of parts of $\lambda$ equal to $i$. The $q$-binomial coefficient is defined by
$$\qbinom{a}{b}{q} = \frac{(1-q^a)(1-q^{a-1})\cdots(1-q^{a-b+1})}{(1-q^b)(1-q^{b-1})\cdots(1-q)}$$
and is a polynomial in $q$ that gives $\binom a b$ when $q = 1$. For a partition $\lambda$, define $n(\lambda) = \sum_i (i-1)\lambda_i = \sum_i \binom{\lambda_i^c}2$.
\medskip

Given two partitions $\lambda$ and $\mu$, we say $\mu\subseteq \lambda$ if $\lambda_i \geq \mu_i$ for all $i\geq1$, in which case we may consider the pair as a \demph{skew shape} $\lambda/\mu$. We write $[\lambda/\mu]$ for the cells $\{(i,j) \colon 1\leq i\leq \ell(\lambda),\, \mu_i < j \leq \lambda_i\}$. 
We say that $\lambda/\mu$ is a \demph{horizontal strip} (respectively \demph{vertical strip}) if $[\lambda/\mu]$ contains no $2 \times 1$ (respectively $1 \times 2$) block, equivalently, if $\lambda_i^c \leq \mu_i^c + 1$ (respectively $\lambda_i \leq \mu_i+1$) for all $i$. We say that $\lambda/\mu$ is a \demph{ribbon} if $[\lambda/\mu]$ is connected and if it contains no $2 \times 2$ block, and that $\lambda/\mu$ is a \demph{broken ribbon} if $[\lambda/\mu]$ contains no $2 \times 2$ block, equivalently, if $\lambda_i \leq \mu_{i-1}+1$ for $i \geq 2$. The Young diagram of a broken ribbon is a disjoint union of $\rib(\lambda/\mu)$ number of ribbons. The \demph{height} $\hgt(\lambda/\mu)$ (respectively \demph{width} $\wt(\lambda/\mu)$) of a ribbon is the number of non-empty rows (respectively columns) of $[\lambda/\mu]$, minus $1$. The height (respectively width) of a broken ribbon is the sum of heights (respectively widths) of the components.
\medskip

Let us define some polynomials. For a horizontal strip $\lambda/\mu$, define 
$$
	\hs_{\lambda/\mu}(t) = \prod_{\begin{array}{c} {\scriptstyle \lambda^c_j = \mu^c_j+1} \\ {\scriptstyle \lambda^c_{j+1} = \mu^c_{j+1}} \end{array}} (1-t^{m_j(\lambda)}).
$$
If $\lambda/\mu$ is not a horizontal strip, define $\hs_{\lambda/\mu}(t) = 0$. 
For a vertical strip $\lambda/\mu$, define
$$
	\vs_{\lambda/\mu}(t) = \prod_{j \geq 1} \begin{bmatrix} \lambda^c_j - \lambda^c_{j+1} \\ \lambda^c_j - \mu^c_j \end{bmatrix}_t.
$$
If $\lambda/\mu$ is not a vertical strip, define $\vs_{\lambda/\mu}(t) = 0$. 
For a broken ribbon $\lambda/\mu$, define 
$$
	\br_{\lambda/\mu}(t) = (-t)^{\hgt(\lambda/\mu)} (1-t)^{\rib(\lambda/\mu)}.
$$
If $\lambda/\mu$ is not a broken ribbon, define $\br_{\lambda/\mu}(t) = 0$. For any skew shape $\lambda/\mu$, define
$$
	\sk_{\lambda/\mu}(t) = t^{\sum_j \binom{\lambda^c_j - \mu^c_j}{2}} \prod_{j\geq1} \begin{bmatrix}  \lambda^c_j - \mu ^c_{j+1} \\ m_j(\mu) \end{bmatrix}_t.
$$
\medskip

Next, recall the \emph{$q$-binomial theorem}. For all $n,k\geq0$, we have
\begin{equation} \label{qbt}
 \prod_{i=0}^{n-1}(t+q^i) = \sum_{k=0}^n q^{\binom {n-k} 2} \qbinom n k q t^k.
\end{equation}
This may be proven by induction from the standard identity $\qbinom n k q = q^k \qbinom {n-1} k q + \qbinom{n-1} {k-1} q$.

\begin{lemma} \label{l: hs}
 For fixed partitions $\lambda,\mu$ satisfying $\mu \subseteq \lambda$, we have
 $$\sum_\nu (-t)^{|\lambda/\nu|} \vs_{\lambda/\nu}(t) \sk_{\nu/\mu}(t) = \hs_{\lambda/\mu}(t),$$
 with the sum over all $\nu$, $\mu \subseteq \nu \subseteq \lambda$, for which $\lambda/\nu$ is a vertical strip.
\end{lemma}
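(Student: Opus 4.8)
The plan is to pass to conjugate partitions, observe that the whole sum factors as a product over columns, and evaluate each factor using the $q$-binomial theorem \eqref{qbt}.

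Write $a_j=\lambda^c_j$, $b_j=\mu^c_j$, and for $\nu$ in the sum $c_j=\nu^c_j$. One checks that in these coordinates $\mu\subseteq\nu$, $\nu\subseteq\lambda$, and ``$\lambda/\nu$ is a vertical strip'' become, respectively, $c_j\ge b_j$, $c_j\le a_j$, and $c_j\ge a_{j+1}$ for all $j$; and whenever these hold, $c_{j+1}\le a_{j+1}\le c_j$ forces $(c_j)$ to be the conjugate of a partition. Hence $\nu$ ranges over exactly the sequences with $\max(a_{j+1},b_j)\le c_j\le a_j$ for each $j$, with no condition linking different columns. Since
\[
(-t)^{|\lambda/\nu|}=\prod_j(-t)^{a_j-c_j},\qquad \vs_{\lambda/\nu}(t)=\prod_j\qbinom{a_j-a_{j+1}}{a_j-c_j}{t},\qquad \sk_{\nu/\mu}(t)=\prod_j t^{\binom{c_j-b_j}{2}}\qbinom{c_j-b_{j+1}}{b_j-b_{j+1}}{t},
\]
each $j$th factor depending only on $c_j$, the left-hand side equals $\prod_j S_j$. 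Substituting $i=a_j-c_j$ and writing $N=N_j:=a_j-b_j$, $P=P_j:=a_j-a_{j+1}=m_j(\lambda)$, $M=M_j:=b_j-b_{j+1}=m_j(\mu)$, we obtain $S_j=S(N_j,P_j,M_j)$ with
\[
S(N,P,M):=\sum_{i\ge 0}(-t)^i\qbinom{P}{i}{t}\,t^{\binom{N-i}{2}}\qbinom{M+N-i}{M}{t};
\]
the hypothesis $\mu\subseteq\lambda$, read off at column $j+1$, is exactly the constraint $P\le N+M$.

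It remains to evaluate $S(N,P,M)$ when $P\le N+M$. Short computations give $S(0,P,M)=1$ and $S(1,P,M)=(1-t^{M+1}-t+t^{P+1})/(1-t)$, which is $1$ if $P=M$ and $1-t^{P}$ if $P=M+1$. The essential fact is that $S(N,P,M)=0$ whenever $N\ge 2$ and $P\in\{N+M-1,N+M\}$. When $P=N+M$, the ``subset of a subset'' identity $\qbinom{A}{i}{t}\qbinom{A-i}{M}{t}=\qbinom{A}{M}{t}\qbinom{A-M}{i}{t}$ with $A=N+M$ collapses the sum to $\qbinom{N+M}{M}{t}\sum_i(-t)^i\qbinom{N}{i}{t}t^{\binom{N-i}{2}}=\qbinom{N+M}{M}{t}\prod_{l=0}^{N-1}(t^{l}-t)$ by \eqref{qbt}, and the $l=1$ factor is zero. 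When $P=N+M-1$, one Pascal step on $\qbinom{M+N-i}{M}{t}$ splits $S$ into two sums, each of which---again by the subset-of-a-subset identity followed by \eqref{qbt}---is a multiple of $\prod_{l=0}^{N-2}(t^{l}-1)$ or of $\prod_{l=0}^{N-1}(t^{l}-1)$, and the $l=0$ factor is zero; the degenerate case $M=0$ reduces directly to the first type.

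Finally we assemble. If $\lambda/\mu$ is a horizontal strip then $N_j\in\{0,1\}$ for all $j$; when $N_j=1$ the constraint together with the horizontal-strip condition at column $j+1$ forces $P_j\in\{M_j,M_j+1\}$, with $P_j=M_j+1$ exactly when $\lambda^c_{j+1}=\mu^c_{j+1}$, so $S_j=1-t^{m_j(\lambda)}$ precisely when $\lambda^c_j=\mu^c_j+1$ and $\lambda^c_{j+1}=\mu^c_{j+1}$, and $S_j=1$ otherwise; multiplying gives $\hs_{\lambda/\mu}(t)$. If $\lambda/\mu$ is not a horizontal strip then $\hs_{\lambda/\mu}(t)=0$ and it suffices to produce one vanishing factor: letting $K$ be the largest index with $N_K\ge2$, we have $N_{K+1}\le1$ while $N_{K+1}\ge0$ since $\mu\subseteq\lambda$, so $P_K=N_K+M_K-N_{K+1}\in\{N_K+M_K-1,N_K+M_K\}$ and the essential fact yields $S_K=0$. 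The one genuinely delicate step is this vanishing of $S(N,P,M)$ for $N\ge2$: it is tempting to hope that every column factor with $N_j\ge2$ vanishes, but that is false (e.g.\ $S(N,0,0)=t^{\binom{N}{2}}$), so one must first locate the correct column---the last place where the excess $N$ drops back to $\le1$---and perform the $q$-binomial cancellation there. Everything else is bookkeeping in conjugate coordinates.
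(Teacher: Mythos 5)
Your proof is correct and follows essentially the same route as the paper's: both exploit the column-by-column independence of the choice of $\nu$ to factor the left-hand side into single-column sums, evaluate those factors directly when $\lambda^c_j-\mu^c_j\le 1$, and kill the factor at the \emph{largest} column index with $\lambda^c_j-\mu^c_j\ge 2$ via two applications of the $q$-binomial theorem \eqref{qbt} (at $t=-t$ and $t=-1$). The differences are only organizational --- your uniform $S(N,P,M)$ together with a Pascal step and the subset-of-a-subset identity, versus the paper's explicit ratio of $q$-binomials --- so the two arguments are the same in substance.
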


\begin{proof}
 Let $a_j = \lambda^c_j - \max(\mu^c_j,\lambda^c_{j+1}) \geq 0$. A partition $\nu$, $\mu \subseteq \nu \subseteq \lambda$, for which $\lambda/\nu$ is a vertical strip is obtained by choosing $k_j$, $0 \leq k_j \leq a_j$, and removing $k_j$ bottom cells of column $j$ in $\lambda$. See Figure \ref{fig1} for the example for $\lambda = 98886666444$ and $\mu = 77666633331$, where $a_4 = 3$, $a_6 = 2$, $a_8 = 3$, $a_9 = 1$ and $a_i = 0$ for all other $i$.
\begin{figure}[ht!]
 \begin{center}
    \includegraphics[height=2.75cm]{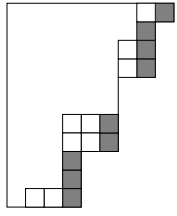}
   \caption{A partition $\nu$ ($\mu \subseteq \nu \subseteq \lambda$) for which $\lambda/\nu$ is a vertical strip within $\lambda/\mu$ is built from $\lambda$ by removing some number of the shaded cells of $[\lambda]$.
}
   \label{fig1}
 \end{center}
\end{figure}

We have $|\lambda/\nu| = \sum_j k_j$, $\nu^c_j = \lambda^c_j - k_j$. The choices of the $k_j$ are independent, which means that 
$$
 	\sum_\nu (-t)^{|\lambda/\nu|} \sk_{\nu/\mu}(t) \vs_{\lambda/\nu}(t) = 
	\sum_{k_1,k_2,\ldots} (-t)^{\sum_j k_j} t^{\sum_j \binom{\nu^c_j - \mu^c_j}{2}} \prod_j \qbinom{\nu^c_j - \mu ^c_{j+1}}{m_j(\mu)} t \prod_j \qbinom{\lambda^c_j - \lambda^c_{j+1}}{\lambda^c_j - \nu^c_j}t
$$ 
\begin{equation}\label{e: prod-sum}
	= \prod_j \sum_{k_j = 0}^{a_j} (-t)^{k_j} t^{\binom{\lambda^c_j - \mu^c_j -k_j} 2} \qbinom{\lambda^c_j-k_j-\mu^c_{j+1}}{m_j(\mu)}t \qbinom{m_j(\lambda)}{k_j}t. 
\end{equation} 
We analyze \eqref{e: prod-sum} case-by-case, showing that it reduces to $\hs_{\lambda/\mu}(t)$ when $\lambda/\mu$ is a horizontal strip and zero otherwise. Assume first that $\lambda/\mu$ is a horizontal strip. This means that $a_j \leq \lambda^c_j - \mu^c_j \leq 1$ for all $j$. 
\smallskip

\noindent\emph{Case 1: $a_j=0$.} We have $\max(\mu^c_j,\lambda^c_{j+1}) = \lambda^c_j$, so the inner sum in \eqref{e: prod-sum} is equal to 
$$
   \qbinom{\lambda^c_j-\mu^c_{j+1}}{m_j(\mu)}t= \qbinom{\lambda^c_j-\mu^c_{j+1}}{\mu^c_j-\mu^c_{j+1}}t.
$$
If $\mu^c_j = \lambda^c_j$, this is $1$, and if $\mu^c_j = \lambda^c_j - 1$ and $\lambda^c_{j+1} = \lambda^c_j$, then $\mu^c_{j+1} = \mu^c_j$ and so the expression is also $1$. 
\smallskip

\noindent\emph{Case 2: $a_j = 1$.}  This holds if and only if $\lambda^c_j = \mu^c_j + 1$, $\lambda^c_{j+1} \leq \lambda^c_j - 1$, in which case the sum in \eqref{e: prod-sum} is 
$$
	(-t)^0 t^{\binom 1 2} \qbinom{1+m_j(\mu)}{m_j(\mu)}t \qbinom{m_j(\lambda)}{0}t + (-t)^1 t^{\binom 0 2} \qbinom{m_j(\mu)}{m_j(\mu)}t \qbinom{m_j(\lambda)}{1}t  
$$ 
$$
	= 1+t+\ldots + t^{m_j(\mu)} - t\bigl(1+t+\ldots+t^{m_j(\lambda)-1}\bigr) = \left\{ \begin{array}{c@{\ \ }c@{\ \ }l} 1 - t^{m_j(\lambda)} & : & \lambda^c_j = \mu^c_j + 1, \lambda^c_{j+1} 
	= \mu^c_{j+1} \\ 1 & : & \mbox{otherwise} \end{array} \right.\!\!.
$$
Indeed, $\lambda^c_j = \mu_j^c+1$ and $\lambda^c_{j+1} = \mu_{j+1}^c+1$ imply $m_j(\mu)=m_j(\lambda)$, while $\lambda^c_j = \mu_j^c+1$ and $\lambda^c_{j+1} = \mu_{j+1}^c$ imply $\lambda^c_{j+1} \leq \mu^c_j = \lambda^c_j - 1$ and $m_j(\mu)=m_j(\lambda)-1$. Thus \eqref{e: prod-sum} equals $\hs_{\lambda/\mu}(t)$ whenever $\lambda/\mu$ is a horizontal strip.
\smallskip

Now assume that $\lambda/\mu$ is not a horizontal strip. Let $j$ be the largest index for which $\lambda^c_j - \mu^c_j \geq 2$. Let us investigate two cases, when $\lambda^c_{j+1} > \mu^c_j$ and when $\lambda^c_{j+1} \leq \mu^c_j$.
\smallskip

\noindent\emph{Case 1: $\lambda^c_{j+1} > \mu^c_j$.} We must have $\lambda^c_{j+1} = \mu^c_j + 1$ and $\mu^c_{j+1} = \mu^c_j$, for otherwise $\lambda^c_{j+1} - \mu^c_{j+1} = (\lambda^c_{j+1} - \mu^c_j) + (\mu^c_j - \mu^c_{j+1}) \geq 2$, which contradicts the maximality of $j$. So $a_j = m_j(\lambda)$, $\lambda^c_j - \mu^c_j =\lambda^c_j - \mu^c_{j+1} = m_j(\lambda)+1$, $m_j(\mu) = 0$, $m_j(\lambda) \geq 1$ and
 \begin{align*}
 \phantom{=}& \sum_{k_j = 0}^{a_j} (-t)^{k_j} t^{\binom{\lambda^c_j - \mu^c_j -k_j} 2} \qbinom{\lambda^c_j-k_j-\mu^c_{j+1}}{m_j(\mu)}t \qbinom{m_j(\lambda)}{k_j}t = \sum_{k_j = 0}^{m_j(\lambda)} (-t)^{k_j} t^{\binom{m_j(\lambda)+1 -k_j} 2} \qbinom{m_j(\lambda)}{k_j}t \\
 = &\sum_{k_j = 0}^{m_j(\lambda)} (-t)^{k_j} t^{\binom{m_j(\lambda)-k_j} 2 + m_j (\lambda)- k_j} \qbinom{m_j(\lambda)}{k_j}t = t^{m_j(\lambda)} \sum_{k_j=0}^{m_j(\lambda)} (-1)^{k_j} t^{\binom{m_j(\lambda)-k_j} 2} \qbinom{m_j(\lambda)}{k_j}t.
 \end{align*}
Using \eqref{qbt} with $n = m_j(\lambda)$, $t = -1$ and $q = t$, the above simplifies to
$$
   t^{m_j(\lambda)} \prod_{i=0}^{m_j(\lambda)-1} (-1+t^i) = 0.
$$
\smallskip

\noindent\emph{Case 2: $\lambda^c_{j+1} \leq \mu^c_j$.} We consider two further options. If $\mu^c_{j+1} = \lambda^c_{j+1}$, then $a_j = \lambda^c_j - \mu^c_j = m_j(\lambda) - m_j(\mu) \geq 2$ and
 \begin{align*}
 \phantom{=}&\sum_{k_j = 0}^{a_j} (-t)^{k_j} t^{\binom{\lambda^c_j - \mu^c_j -k_j} 2} \qbinom{\lambda^c_j-k_j-\mu^c_{j+1}}{m_j(\mu)}t \qbinom{m_j(\lambda)}{k_j}t  \\
=&\sum_{k_j = 0}^{m_j(\lambda)-m_j(\mu)} (-t)^{k_j} t^{\binom{m_j(\lambda)-m_j(\mu) -k_j} 2} \qbinom{m_j(\lambda)-k_j}{m_j(\mu)}t \qbinom{m_j(\lambda)}{k_j}t \\
 =& \sum_{k_j = 0}^{m_j(\lambda)-m_j(\mu)} (-t)^{k_j} t^{\binom{m_j(\lambda)-m_j(\mu) -k_j} 2} \qbinom{m_j(\lambda)-m_j(\mu)}{k_j}t \qbinom{m_j(\lambda)}{m_j(\mu)}t.
 \end{align*}
 If we use \eqref{qbt} with $n = m_j(\lambda)-m_j(\mu)$, $t = -t$ and $q = t$, we get
 $$
    \qbinom{m_j(\lambda)}{m_j(\mu)}t \prod_{i=0}^{m_j(\lambda)-m_j(\mu) - 1} (-t + t^i) = 0.
 $$
 On the other hand, if $\mu^c_{j+1} = \lambda^c_{j+1} - 1$, then $a_j = \lambda^c_j - \mu^c_j = m_j(\lambda) - m_j(\mu) + 1\geq 2$ and
\begin{flalign*}
    \phantom{=}&\sum_{k_j = 0}^{a_j} (-t)^{k_j} t^{\binom{\lambda^c_j - \mu^c_j -k_j} 2} \qbinom{\lambda^c_j-k_j-\mu^c_{j+1}}{m_j(\mu)}t \qbinom{m_j(\lambda)}{k_j}t&\\
 &=\sum_{k_j = 0}^{m_j(\lambda) - m_j(\mu) + 1} (-t)^{k_j} t^{\binom{m_j(\lambda) - m_j(\mu) + 1 -k_j} 2} \qbinom{m_j(\lambda)+1-k_j}{m_j(\mu)}t \qbinom{m_j(\lambda)}{k_j}t&\\
 &=\!\!\!\!\!\sum_{k_j = 0}^{m_j(\lambda) - m_j(\mu) + 1} \!\!\!\!\!(-t)^{k_j} t^{\binom{m_j(\lambda) - m_j(\mu) + 1 -k_j} 2} \frac{1 - t^{m_j(\lambda)+1-k_j}}{1-t^{m_j(\lambda)-m_j(\mu) + 1}} \qbinom{m_j(\lambda)-m_j(\mu)+1}{k_j}t \qbinom{m_j(\lambda)}{m_j(\mu)}t&
\end{flalign*}
\begin{flalign*}
&=\!\frac{1}{1-t^{m_j(\lambda)-m_j(\mu) + 1}} \qbinom{m_j(\lambda)}{m_j(\mu)}t \!\!\left( \sum_{k_j = 0}^{m_j(\lambda) - m_j(\mu) + 1} \!\!\!\!\!\!\! (-t)^{k_j} t^{\binom{m_j(\lambda) - m_j(\mu) + 1 -k_j} 2} \!\qbinom{m_j(\lambda)-m_j(\mu)+1}{k_j}t\right.&\\
&-\left. \sum_{k_j = 0}^{m_j(\lambda) - m_j(\mu) + 1} (-1)^{k_j} t^{\binom{m_j(\lambda) - m_j(\mu) + 1 -k_j} 2} t^{m_j(\lambda)+1}\qbinom{m_j(\lambda)-m_j(\mu)+1}{k_j}t \right).&
 \end{flalign*} 
We prove that the first (respectively, second) sum is $0$ by substituting $n = m_j(\lambda)-m_j(\mu) + 1$, $t = -t$ (respectively, $t = -1$) and $q = t$ in \eqref{qbt}. This finishes the proof of the lemma.
\end{proof}

\subsection{Elementary Hall--Littlewood identities} \label{s: id}

We give two applications of Lemma \ref{l: hs}, then prove some elementary properties on Hall--Littlewood functions that will be useful in Section \ref{s: skew proofs}. The first application is a formula for the product of a Hall--Littlewood polynomial with the Schur function $s_r$. 

\begin{proof}[Proof of Theorem \ref{thm: P.s}]
 The proof is by induction on $r$. For $r = 0$, there is nothing to prove. For $r > 0$, we use the formula
 \begin{equation} \label{qr}
  q_r = \sum_{k=0}^r (-t)^k s_{r-k}e_k,
 \end{equation}
which is proven as follows. It is well-known and easy to prove (see e.g.\ \cite[Exercise 7.11]{St}) that
$$
   P_r = \sum_{\tau\, \vdash\, n} (1-t)^{\ell(\tau)-1} m_\tau = \sum_{k=0}^{r-1} (-t)^k s_{r-k,1^k}\,.
$$
The conjugate Pieri rule then gives \eqref{qr}, for
$$
   \sum_{k=0}^r (-t)^k s_{r-k} e_k = s_r + \sum_{k=1}^{r-1} (-t)^k (s_{r-k,1^k}+s_{r-k+1,1^{k-1}}) + (-t)^r s_{1^r} = q_r\,.
$$
For $|\lambda^+/\lambda| = r$, the coefficient of $P_{\lambda^+}$ in
$$
   P_\lambda \, s_r = P_\lambda \left( q_r - \sum_{k=1}^r (-t)^k s_{r-k} e_k\right)
$$
reduces by induction, \eqref{e: vs} and \eqref{e: hs} to
$$
   \hs_{\lambda^+/\lambda}(t) - \sum (-t)^{|\lambda^+/\nu|} \sk_{\nu/\lambda}(t) \vs_{\lambda^+/\nu}(t),
$$
with the sum over all $\nu$, $\lambda \subseteq \nu \subseteq \lambda^+$, for which $\lambda^+/\nu$ is a vertical strip of size at least $1$. By Lemma \ref{l: hs}, this is equal to $\sk_{\lambda^+/\lambda}(t)$.
\end{proof}

Recall that $f^\lambda_{\mu,\tau}(t)$ is the (polynomial) coefficient of $P_\lambda$ in $P_\mu P_\tau$.

\begin{corollary}\label{cor: P.s}
The structure constants $f_{\mu,\tau}^\lambda(t)$ satisfy 
$\displaystyle
	\sum_\tau t^{n(\tau)} f^\lambda_{\mu,\tau}(t) = \sk_{\lambda/\mu}(t).
$
\end{corollary}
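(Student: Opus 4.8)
The plan is to derive Corollary \ref{cor: P.s} directly from Theorem \ref{thm: P.s} by expanding the Schur function $s_r$ in the Hall--Littlewood basis and comparing coefficients. First I would recall the well-known expansion $s_r = \sum_{\tau \vdash r} t^{n(\tau)} P_\tau(t)$, which follows from the transition matrix between Schur and Hall--Littlewood functions (e.g.\ from the fact that $s_\lambda = \sum_\mu K_{\lambda\mu}(t) P_\mu(t)$ with $K_{\lambda\mu}(t)$ the Kostka--Foulkes polynomials, specialized to $\lambda = (r)$, where $K_{(r),\tau}(t) = t^{n(\tau)}$). Substituting this into $P_\mu \, s_r$ and using the definition of the structure constants gives
\[
	P_\mu \, s_r = \sum_{\tau \vdash r} t^{n(\tau)} P_\mu P_\tau = \sum_{\tau \vdash r} t^{n(\tau)} \sum_\lambda f^\lambda_{\mu,\tau}(t) P_\lambda = \sum_\lambda \left( \sum_\tau t^{n(\tau)} f^\lambda_{\mu,\tau}(t) \right) P_\lambda.
\]

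On the other hand, Theorem \ref{thm: P.s} (with $\lambda$ there playing the role of $\mu$ here) gives $P_\mu \, s_r = \sum_{\lambda} \sk_{\lambda/\mu}(t) P_\lambda$, the sum over $\lambda \supseteq \mu$ with $|\lambda/\mu| = r$. Since the $\{P_\lambda(t)\}$ form a basis of $\sym[t]$, I would then equate the coefficient of $P_\lambda$ on both sides, obtaining $\sum_\tau t^{n(\tau)} f^\lambda_{\mu,\tau}(t) = \sk_{\lambda/\mu}(t)$ (noting that when $\lambda \not\supseteq \mu$ or $|\lambda/\mu| \ne r$ both sides vanish, consistently with the convention $\sk_{\lambda/\mu}(t) = 0$ in that case). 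This completes the proof.

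The only real content beyond Theorem \ref{thm: P.s} is the Schur-to-Hall--Littlewood expansion of $s_r$, so the main (minor) obstacle is to justify $s_r = \sum_{\tau \vdash r} t^{n(\tau)} P_\tau$ cleanly. I expect this can be cited directly from \cite{Mac} (Chapter III), or alternatively deduced from the dual statement: the expansion $P_r = \sum_{k=0}^{r-1}(-t)^k s_{r-k,1^k}$ used in the proof of Theorem \ref{thm: P.s} can be inverted, or one simply invokes that the $(r)$-row of the inverse Kostka--Foulkes matrix has entries $t^{n(\tau)}$. Since the statement of the corollary is essentially a reformulation, I would keep the proof to a few lines, making explicit only the expansion of $s_r$ and the comparison of coefficients.
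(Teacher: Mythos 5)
Your proof is correct and follows essentially the same route as the paper: the paper's one-line proof likewise expands $s_r = \sum_{\tau \vdash r} t^{n(\tau)} P_\tau$ (citing \cite[p.~219, (2)]{Mac}, and noting it is also the $\lambda=\emptyset$ case of Theorem~\ref{thm: P.s}) and compares coefficients of $P_\lambda$ against Theorem~\ref{thm: P.s}. The only nit is your passing reference to the ``inverse'' Kostka--Foulkes matrix---the relevant entries $K_{(r),\tau}(t)=t^{n(\tau)}$ belong to the Kostka--Foulkes matrix itself, as you state correctly earlier.
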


\begin{proof}This follows from $s_r = \sum_{\tau \vdash r} t^{n(\tau)} P_\tau$, which is (2) in \cite[page 219]{Mac} and also Theorem \ref{thm: P.s} for $\lambda = \emptyset$.
\end{proof}

The second application of Lemma \ref{l: hs} is the following generalization of Example 1 of \cite[\S III.3, Example 1]{Mac}. 

\begin{theorem} \label{thm: y}
  For every $\lambda,\mu$, we have
 \begin{equation} \label{eq1}
  \sum_{\nu} \vs_{\lambda/\nu}(t) \sk_{\nu/\mu}(t) y^{|\lambda/\nu|} = \sum_\sigma t^{n(\sigma)-\binom{\ell(\sigma)}2} f_{\sigma \mu}^\lambda(t)\prod_{j=1}^{\ell(\sigma)} (y+t^{j-1}).
 \end{equation}
 Equivalently, for all $m$,
 \begin{equation} \label{eq2}
  \sum_{\nu \colon |\lambda/\nu|=m} \vs_{\lambda/\nu}(t) \sk_{\nu/\mu}(t) = \sum_\sigma t^{n(\sigma)-\binom m 2} f_{\sigma \mu}^\lambda(t) \qbinom{\ell(\sigma)} m {t^{-1}}.
 \end{equation}
\end{theorem}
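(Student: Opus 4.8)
The plan is to prove the generating-function form \eqref{eq1}; then \eqref{eq2} drops out by extracting the coefficient of $y^m$. The first and only substantial step is the base case $\mu=\emptyset$. Since $\sk_{\nu/\emptyset}(t)=t^{n(\nu)}$ and $f^\lambda_{\sigma,\emptyset}(t)=\delta_{\sigma\lambda}$, \eqref{eq1} then reads
\[
	\sum_{\nu}\vs_{\lambda/\nu}(t)\,t^{n(\nu)}\,y^{|\lambda/\nu|}\ =\ t^{\,n(\lambda)-\binom{\ell(\lambda)}{2}}\prod_{j=1}^{\ell(\lambda)}\bigl(y+t^{\,j-1}\bigr),
\]
with the sum over $\nu\subseteq\lambda$ for which $\lambda/\nu$ is a vertical strip. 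This case is essentially \cite[\S III.3, Example~1]{Mac}; I would also include a direct proof by rerunning the argument of Lemma~\ref{l: hs} with the weight $(-t)^{k_j}$ replaced by $y^{k_j}$ and $\mu=\emptyset$. Parametrizing $\nu$ by $\nu^c_j=\lambda^c_j-k_j$ with $0\le k_j\le m_j(\lambda)$, the left side factors over columns as $\prod_{j}\sum_{k_j=0}^{m_j(\lambda)}y^{k_j}\,t^{\binom{\lambda^c_j-k_j}{2}}\qbinom{m_j(\lambda)}{k_j}{t}$; using $\binom{\lambda^c_j-k_j}{2}=\binom{\lambda^c_{j+1}}{2}+\lambda^c_{j+1}\bigl(m_j(\lambda)-k_j\bigr)+\binom{m_j(\lambda)-k_j}{2}$ together with the $q$-binomial theorem \eqref{qbt}, the $j$-th factor becomes $t^{\binom{\lambda^c_{j+1}}{2}}\prod_{i=\lambda^c_{j+1}}^{\lambda^c_j-1}(y+t^i)$, after which the product over $j$ telescopes: the exponents $\binom{\lambda^c_{j+1}}{2}$ sum to $n(\lambda)-\binom{\ell(\lambda)}{2}$, and the index intervals $[\lambda^c_{j+1},\lambda^c_j-1]$ tile $\{0,1,\dots,\ell(\lambda)-1\}$.

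Next I would bootstrap to the general statement. Applying the base case with $\lambda$ replaced by an arbitrary partition $\rho$ shows, degree by degree, the symmetric-function identity
\[
	\sum_{\rho}t^{\,n(\rho)-\binom{\ell(\rho)}{2}}\Bigl(\prod_{j=1}^{\ell(\rho)}(y+t^{\,j-1})\Bigr)P_\rho\ =\ \Bigl(\sum_{m\ge0}y^m e_m\Bigr)\Bigl(\sum_{r\ge0}s_r\Bigr),
\]
because the $P_\rho$-coefficient on the right---computed from $s_r=\sum_{\tau\vdash r}t^{n(\tau)}P_\tau$ \cite[p.~219]{Mac}, $e_m=P_{1^m}$, and the Pieri rule \eqref{e: vs}---equals $\sum_{\tau}t^{n(\tau)}\vs_{\rho/\tau}(t)y^{|\rho/\tau|}$, which the base case identifies with the $P_\rho$-coefficient on the left. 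Multiplying this identity by $P_\mu$ and taking the coefficient of $P_\lambda$: on the right, Theorem~\ref{thm: P.s} gives $P_\mu s_k=\sum_\nu\sk_{\nu/\mu}(t)P_\nu$ and then \eqref{e: vs} produces $\sum_{\nu}\vs_{\lambda/\nu}(t)\sk_{\nu/\mu}(t)\,y^{|\lambda/\nu|}$, the left side of \eqref{eq1}; on the left, the coefficient of $P_\lambda$ in $P_\mu P_\rho$ is $f^\lambda_{\rho,\mu}(t)=f^\lambda_{\sigma,\mu}(t)$ by commutativity, giving the right side of \eqref{eq1}. This proves \eqref{eq1}.

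To obtain \eqref{eq2} I would compare coefficients of $y^m$ in \eqref{eq1}: expand $\prod_{j=1}^{\ell(\sigma)}(y+t^{j-1})=\sum_m t^{\binom{\ell(\sigma)-m}{2}}\qbinom{\ell(\sigma)}{m}{t}\,y^m$ by \eqref{qbt}, rewrite $\qbinom{\ell(\sigma)}{m}{t}=t^{m(\ell(\sigma)-m)}\qbinom{\ell(\sigma)}{m}{t^{-1}}$, and simplify the resulting power of $t$ via $\binom{\ell(\sigma)-m}{2}+m(\ell(\sigma)-m)+\binom{m}{2}=\binom{\ell(\sigma)}{2}$, which turns the overall exponent into $n(\sigma)-\binom{m}{2}$.

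The only genuine computation is the base case in the first step---a one-variable refinement of the proof of Lemma~\ref{l: hs}---and I expect the main obstacle to be bookkeeping (keeping the bijection $\nu\leftrightarrow(k_1,k_2,\dots)$ and the telescoping of the column factors straight), not anything conceptual. The passage to the symmetric-function identity and then to \eqref{eq2} is routine, and if one is content to cite \cite{Mac} for the base case the whole argument is short and self-contained.
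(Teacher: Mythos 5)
Your proposal is correct and follows essentially the same route as the paper: both evaluate $P_\mu\cdot\bigl(\sum_r s_r\bigr)\bigl(\sum_m e_m y^m\bigr)$ in two ways---once via Theorem~\ref{thm: P.s} and the Pieri rule \eqref{e: vs}, once via Macdonald's \cite[\S III.3, Example~1]{Mac}---and extract the coefficient of $P_\lambda$, then obtain \eqref{eq2} by comparing coefficients of $y^m$ with the $q$-binomial theorem. The only difference is that you re-derive Macdonald's Example~1 by a column-by-column factorization (your ``base case''), whereas the paper simply cites it; your computation there checks out.
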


\begin{proof}
 Let us evaluate 
$
	P_\mu \, s_r \left( \sum_m e_m \, y^m \right) 
$ 
in two different ways. On the one hand, 
$$
	P_\mu \, s_r \left( \sum_m e_m \, y^m \right) = \left( \sum_\nu \sk_{\nu/\mu}(t) P_\nu \right)  \!\left( \sum_m e_m \, y^m \right) 
	= \sum_{\nu,\lambda} \sk_{\nu/\mu}(t) \vs_{\lambda/\nu}(t) P_\lambda \, y^{|\lambda/\nu|}. 
$$ 
On the other hand, using Example 1 on page 218 of \cite{Mac},
$$
	P_\mu \, s_r \left( \sum_m e_m y^m \right) = P_\mu \sum_\sigma t^{n(\sigma)} P_\sigma \prod_{j=1}^{\ell(\sigma)} (1+t^{1-j}y) 
	= \sum_{\sigma,\lambda} t^{n(\sigma)-\binom{\ell(\sigma)}2} f_{\sigma\mu}^\lambda(t) P_\lambda \prod_{j=1}^{\ell(\sigma)}(y+t^{j-1}).
$$
Now \eqref{eq1} follows by taking the coefficient of $P_\lambda$ in both expressions. For \eqref{eq2}, we use the $q$-binomial theorem \eqref{qbt} and
$$
	\qbinom n k {t^{-1}} = t^{\binom k 2 + \binom {n-k}2 - \binom n 2} \qbinom n k t. 
$$ 
\end{proof}

\begin{remark}
 The theorem is indeed a generalization of \cite[\S III.3, Example 1]{Mac}. For $\mu = \emptyset$, $\sk_{\nu/\mu}(t) = t^{n(\nu)}$, and the right-hand side of \eqref{eq2} is non-zero only for $\sigma = \lambda$, so the last equation on page 218 (\emph{loc.\ cit.}) follows. It also generalizes Lemma \ref{l: hs}: for $y = -t$, the right-hand side of \eqref{eq1} is non-zero if and only if $\ell(\sigma) = 1$, and is therefore equal to $\hs_{\lambda/\mu}(t)$.
\end{remark}

We finish the section with two more lemmas.

\begin{lemma}  Given $r > k\geq0$, we have 
$$
 	s_{r-k,1^k} = \sum_{\lambda \colon \ell(\lambda) \geq k + 1} t^{\binom{\ell(\lambda)-k}2 + \sum_{i=2}^{\lambda_1} \binom{\lambda_i^c}2} \qbinom{\ell(\lambda)-1} k t P_\lambda.
$$
\end{lemma}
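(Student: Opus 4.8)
The plan is to expand $s_{r-k,1^k}$ into products $s_a\,e_b$, apply Theorem~\ref{thm: P.s} to each product, and then collapse the resulting alternating sum of $q$-binomials with a short telescoping identity. First, the classical Schur Pieri rule \eqref{e: cpr} gives $s_{(m)}\,e_j = s_{(m,1^j)} + s_{(m+1,1^{j-1})}$ for $j\ge1$, and iterating this telescopically yields the hook expansion
\[
  s_{r-k,1^k} \ = \ \sum_{i=0}^{k} (-1)^i\, s_{r-k+i}\, e_{k-i} \ = \ \sum_{i=0}^{k} (-1)^i\, P_{1^{k-i}}\, s_{r-k+i}
\]
(using $e_m = P_{1^m}$). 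Applying Theorem~\ref{thm: P.s} to each term $P_{1^{k-i}}\, s_{r-k+i}$ and extracting the coefficient of $P_\lambda$ (necessarily $\lambda\vdash r$), that coefficient equals $\sum_{i=0}^{k} (-1)^i\, \sk_{\lambda/(1^{k-i})}(t)$, where $\sk_{\lambda/(1^m)}(t)$ is read as $0$ whenever $(1^m)\not\subseteq\lambda$.

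Next I would compute $\sk_{\lambda/(1^m)}(t)$ straight from its definition. With $\mu=(1^m)$ we have $\mu^c_1=m$, $\mu^c_j=0$ for $j\ge2$, and $m_j(\mu)=m$ if $j=1$ and $0$ otherwise; hence every factor of the defining product except the $j=1$ one equals $\qbinom{\cdot}{0}{t}=1$, and
\[
  \sk_{\lambda/(1^m)}(t) \ = \ t^{\,\binom{\ell(\lambda)-m}{2} + \sum_{i=2}^{\lambda_1}\binom{\lambda_i^c}{2}}\,\qbinom{\ell(\lambda)}{m}{t}.
\]
Substituting this and re-indexing by $m=k-i$, the coefficient of $P_\lambda$ becomes
\[
  (-1)^k\, t^{\,\sum_{i=2}^{\lambda_1}\binom{\lambda_i^c}{2}}\, \sum_{m=0}^{k} (-1)^m\, t^{\binom{\ell(\lambda)-m}{2}}\,\qbinom{\ell(\lambda)}{m}{t}.
\]

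The last step is the truncated $q$-binomial evaluation
\[
  \sum_{m=0}^{k} (-1)^m\, t^{\binom{n-m}{2}}\,\qbinom{n}{m}{t} \ = \ (-1)^k\, t^{\binom{n-k}{2}}\,\qbinom{n-1}{k}{t},
\]
which I would prove by inserting the $q$-Pascal relation $\qbinom{n}{m}{t}=\qbinom{n-1}{m}{t}+t^{n-m}\qbinom{n-1}{m-1}{t}$, splitting into two sums, shifting the index of the second by one (using $\binom{a}{2}+a=\binom{a+1}{2}$), and observing that everything cancels except the $m=k$ summand of the first sum. Taking $n=\ell(\lambda)$, the coefficient of $P_\lambda$ simplifies to $t^{\binom{\ell(\lambda)-k}{2}+\sum_{i=2}^{\lambda_1}\binom{\lambda_i^c}{2}}\qbinom{\ell(\lambda)-1}{k}{t}$, which vanishes precisely when $\ell(\lambda)\le k$; this matches the asserted formula, whose sum runs over $\lambda$ with $\ell(\lambda)\ge k+1$.

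I do not expect a real obstacle: given Theorem~\ref{thm: P.s}, the only care needed is the bookkeeping in the formula for $\sk_{\lambda/(1^m)}$ and keeping the two re-indexings (first $m=k-i$, then the telescoping shift) straight. One could instead prove the lemma by induction on $k$ using the Hall--Littlewood Pieri rules \eqref{e: vs}--\eqref{e: hs}, but the route above is shorter.
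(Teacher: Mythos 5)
Your proof is correct, but it takes a genuinely different route from the paper's. The paper evaluates the Kostka--Foulkes polynomials $K_{(r-k,1^k),\lambda}(t)$ directly via the Lascoux--Sch\"utzenberger charge formula \cite[Ch.~III, (6.5)]{Mac}: hook-shaped semistandard tableaux of type $\lambda$ are indexed by $k$-subsets of $\set{2,\dotsc,\ell(\lambda)}$, the charge of each reading word is computed explicitly, and the resulting generating function over subsets is shown by induction on $\ell$ to equal $t^{\binom{\ell-k}{2}}\qbinom{\ell-1}{k}{t}$ (times the fixed factor $t^{\sum_{i\geq 2}\binom{\lambda_i^c}{2}}$ from the strictly decreasing subwords). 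You instead bypass the charge statistic entirely: the hook expansion $s_{r-k,1^k}=\sum_{i=0}^k(-1)^i s_{r-k+i}e_{k-i}$ is the same telescoping of the conjugate Pieri rule that the paper already uses to derive \eqref{qr}, your evaluation of $\sk_{\lambda/(1^m)}(t)$ from the definition is routine and correct, and your truncated alternating sum $\sum_{m=0}^{k}(-1)^m t^{\binom{n-m}{2}}\qbinom{n}{m}{t}=(-1)^k t^{\binom{n-k}{2}}\qbinom{n-1}{k}{t}$ does follow from the $q$-Pascal relation exactly as you describe (only the $m=k$ term of the first sum survives). There is no circularity: Theorem~\ref{thm: P.s} is proved from Lemma~\ref{l: hs} and the classical Hall--Littlewood Pieri rules, independently of this lemma, which itself feeds only into Lemma~\ref{l: omega}. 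The trade-off is that the paper's argument is a self-contained tableau computation resting on a deep but standard formula, whereas yours derives the lemma as a corollary of the paper's own Theorem~\ref{thm: P.s} at the cost of one elementary $q$-binomial identity; both are complete proofs.
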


\begin{proof}
 The lemma follows from a formula due to Lascoux and Sch\" utzenberger. See \cite[Ch.~III,~(6.5)]{Mac}. In that terminology, we have to evaluate $K_{(r-k,1^k),\lambda}(t)$. We choose a semistandard Young tableau $T$ of shape $(r-k,1^k)$ and type $\lambda=(\lambda_1,\ldots,\lambda_\ell)$. Clearly, such tableaux are in one-to-one correspondence with $k$-subsets of the set $\set{2,\ldots,\ell}$. For such a subset $S$, write $s$ for the word with the elements of $S$ in increasing order, and write $\overline s$ for the word with the elements of $\set{2,\ldots,\ell} \setminus S$ in decreasing order. The reverse reading word of the tableau corresponding to $S$ is $\ell^{\lambda_\ell - 1} \cdots 3^{\lambda_3-1} 2^{\lambda_2-1} 1^{\lambda_1} s$. The subwords $w_2,w_3,\ldots$ are all strictly decreasing, and $w_1 = \overline s 1 s$. The charges of $w_2,w_3,\ldots$ are $\binom{\lambda_2^c}2,\binom{\lambda_3^c}2,\ldots$, while the charge of $w_1$ is $\sum_{i \notin S} (\ell - i + 1)$ (sum over $i \notin S$, $2 \leq i \leq \ell$). We have
 $$\sum_{S \subseteq \set{2,\ldots,\ell+1},|S|=k} \!\!\!\!\!\!\!t^{\sum_{i \notin S} (\ell + 1 - i + 1)} = \!\!\!\!\!\!\!\sum_{S \subseteq \set{2,\ldots,\ell},|S|=k-1}\!\!\!\!\!\!\! t^{\sum_{i \notin S} (\ell + 1 - i + 1)} + \!\!\!\!\!\!\!\sum_{S \subseteq \set{2,\ldots,\ell},|S|=k} \!\!\!\!\!\!\!t^{1 + \sum_{i \notin S} (\ell + 1 - i + 1)},$$
 and the formula
 $$\sum_{S \subseteq \set{2,\ldots,\ell},|S|=k} t^{\sum_{i \notin S} (\ell - i + 1)} = t^{\binom{\ell - k}2} \qbinom{\ell - 1} k t$$
 follows by induction on $\ell$. This finishes the proof.
\end{proof}

\begin{lemma} \label{l: omega}
Let $\omega$ be the fundamental involution on $\sym[t]$ defined by $\omega(s_\lambda) = s_{\lambda^c}$. 
We have
$$
  \omega(q_r) = (-1)^{r} \sum_{\lambda \vdash r} c_\lambda(t) P_\lambda,
$$
where
$$
   c_\lambda(t) = t^{\sum_{i=2}^{\lambda_1} \binom{\lambda_i^c+1}2} \prod_{i=1}^{\ell(\lambda)} (-1+t^i).
$$
\end{lemma}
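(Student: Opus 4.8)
The plan is to push $\omega$ through the expansion $q_r=(1-t)P_r=(1-t)\sum_{k=0}^{r-1}(-t)^k s_{r-k,1^k}$ (the formula for $P_r$ recalled in the proof of Theorem~\ref{thm: P.s}), and then re-expand in the Hall--Littlewood basis using the preceding lemma. Conjugation of a hook interchanges arm and leg, so $\omega(s_{r-k,1^k})=s_{k+1,1^{r-k-1}}$, whence
\[
	\omega(q_r)=(1-t)\sum_{k=0}^{r-1}(-t)^k\, s_{k+1,1^{r-k-1}}.
\]

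Next apply the preceding lemma with its ``$r$'' equal to $r$ and its ``$k$'' equal to $r-k-1$ (the hypothesis $r>r-k-1$ holds since $k\ge 0$), giving, for $\lambda$ with $\ell=\ell(\lambda)$,
\[
	s_{k+1,1^{r-k-1}}=\sum_{\ell(\lambda)\ge r-k} t^{\binom{\ell-r+k+1}{2}+\sum_{i=2}^{\lambda_1}\binom{\lambda_i^c}{2}}\qbinom{\ell-1}{r-k-1}{t}\,P_\lambda .
\]
Since $\omega$ preserves degree, only $\lambda\vdash r$ occur; fix such a $\lambda$. The constraint $\ell(\lambda)\ge r-k$ (which also makes the $q$-binomial nonzero) becomes $k\ge r-\ell$, and after the reindexing $m=r-1-k$ the coefficient of $P_\lambda$ in $\omega(q_r)$ is
\[
	(1-t)(-t)^{r-1}\,t^{\sum_{i=2}^{\lambda_1}\binom{\lambda_i^c}{2}}\sum_{m=0}^{\ell-1}(-1)^m\,t^{-m}\,t^{\binom{\ell-m}{2}}\qbinom{\ell-1}{m}{t}.
\]
Now write $\binom{\ell-m}{2}=\binom{\ell-1-m}{2}+(\ell-1-m)$ to extract a factor $t^{\ell-1-m}$; the inner sum becomes $t^{\ell-1}\sum_{m=0}^{\ell-1} t^{\binom{\ell-1-m}{2}}\qbinom{\ell-1}{m}{t}(-t^{-2})^m$, which is $t^{\ell-1}$ times the right-hand side of the $q$-binomial theorem~\eqref{qbt} with $n=\ell-1$, $q=t$, and ``$t$'' specialized to $-t^{-2}$. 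Hence the inner sum equals $t^{\ell-1}\prod_{i=0}^{\ell-2}(-t^{-2}+t^i)=t^{-(\ell-1)}\prod_{j=2}^{\ell}(t^j-1)$.

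It remains to clean up. Using $(1-t)\prod_{j=2}^{\ell}(t^j-1)=-\prod_{j=1}^{\ell}(t^j-1)$ and $-(-t)^{r-1}=(-1)^r t^{r-1}$, the coefficient of $P_\lambda$ in $\omega(q_r)$ is
\[
	(-1)^r\,t^{\,r-\ell+\sum_{i=2}^{\lambda_1}\binom{\lambda_i^c}{2}}\prod_{j=1}^{\ell}(t^j-1).
\]
Finally, $\binom{\lambda_i^c+1}{2}-\binom{\lambda_i^c}{2}=\lambda_i^c$ and $\sum_{i=1}^{\lambda_1}\lambda_i^c=|\lambda|=r$ give $r-\ell=\sum_{i=2}^{\lambda_1}\lambda_i^c$, so the exponent of $t$ equals $\sum_{i=2}^{\lambda_1}\binom{\lambda_i^c+1}{2}$ and the coefficient is exactly $(-1)^r c_\lambda(t)$, as claimed. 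The only real work here is bookkeeping: picking the substitutions $m=r-1-k$ and ``$t$''$\mapsto -t^{-2}$ so the sum collapses via~\eqref{qbt}, and keeping track of the powers of $t$ carefully enough that the surviving exponent reorganizes into $\sum\binom{\lambda_i^c+1}{2}$ using $|\lambda|=r$; that final exponent match is the step most prone to sign/off-by-one errors.
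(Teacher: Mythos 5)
Your proof is correct and follows essentially the same route as the paper: expand $P_r$ into hook Schur functions, apply $\omega$ (which swaps arm and leg of a hook), invoke the preceding lemma to expand each hook in the $P_\lambda$ basis, and collapse the resulting inner sum via the $q$-binomial theorem \eqref{qbt} specialized at $-1/t^2$. The only difference is cosmetic (your reindexing $m=r-1-k$ versus the paper's reindexing before applying $\omega$), and you carry out explicitly the ``simple calculations'' the paper leaves to the reader.
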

\begin{proof}
 We have
 \begin{align*}
   \omega(P_r) &= \omega \left( \sum_{k=0}^{r-1} (-t)^{r-k-1} s_{k+1,1^{r-k-1}}\right) = \sum_{k=0}^{r-1} (-t)^{r-k-1} s_{r-k,1^k} = \\
   &= \sum_{k=0}^{r-1} (-t)^{r-k-1} \left( \sum_{\ell(\lambda) \geq k+1} t^{\binom{\ell(\lambda)-k}2 + \sum_{i=2}^{\lambda_1} \binom{\lambda_i^c}2} \qbinom{\ell(\lambda)-1} k t P_\lambda \right)= \\
   &=\sum_{\lambda \vdash r} \left( \sum_{k=0}^{\ell(\lambda)-1}(-t)^{r-k-1} t^{\binom{\ell(\lambda)-k}2 + \sum_{i=2}^{\lambda_1} \binom{\lambda_i^c}2} \qbinom{\ell(\lambda)-1} k t \right) P_\lambda.
\end{align*}
 Now by the $q$-binomial theorem,
$$
   \prod_{i=2}^{\ell(\lambda)} (-1+t^i) = t^{2(\ell(\lambda)-1)} \prod_{i=0}^{\ell(\lambda)-2} (-1/t^2 + t^i) = t^{2(\ell(\lambda)-1)}  \sum_{k=0}^{\ell(\lambda)-1} t^{\binom{\ell(\lambda)-1-k}{2}} \qbinom{\ell(\lambda)-1} k t \left( - \frac 1 {t^2}\right)^k.
$$
Simple calculations now show that the coefficient of $P_\lambda$ in $\omega(q_r) = (1-t) \omega(P_r)$ is indeed $(-1)^{r} c_\lambda(t)$.
\end{proof}

\section{Hopf Perspective on Skew Elements}\label{s: Hopf}

Recall that $\sym[t]$ has another important basis $\set{Q_\lambda}$, defined by $Q_\lambda = b_\lambda(t) P_\lambda$, where $b_\lambda(t) = \prod_{i \geq 1} (1-t)(1-t^2)\cdots(1-t^{m_i(\lambda)})$. The (extended) Hall scalar product on $\sym[t]$ is uniquely defined by either of the (equivalent) conditions
\[
	\innp(P_\lambda,Q_\mu) = \delta_{\lambda \mu} 
	\qor
	\innp(p_\lambda,p_\mu) = {z_\mu(t)}\,\delta_{\lambda \mu} \,,
\]
where, taking $\mu=(\mu_1,\mu_2,\dotsc,\mu_r)=\langle 1^{a_1},2^{a_2},\dotsb, k^{a_k} \rangle$,
\[
	z_\mu(t) \ = \ z_\mu\cdot \prod_{j=1}^r(1-t^{\mu_j})^{-1} 
		\ = \  \prod_{i=1}^k \left(i^{a_i}a_i!\right) \prod_{j=1}^r(1-t^{\mu_j})^{-1} \,.
\]
See \cite[\S III.4]{Mac}. The skew Hall--Littlewood function $P_{\lambda/\mu}$ is defined \cite[Ch. III, (5.1$'$)]{Mac} as the unique function satisfying
\begin{equation} \label{e: skew-via-duality}
	\innp(P_{\lambda/\mu},Q_\nu) = \innp(P_\lambda, Q_\nu\,Q_\mu)
\end{equation}
for all $Q_\nu \in \sym[t]$. (Likewise for $Q_{\lambda/\mu}$.) If we choose to read $P_{\lambda/\mu}$ as, ``$Q_\mu$ skews $P_\lambda$,'' then we allow ourselves access to the machinery of Hopf algebra actions on their duals. We introduce the basics in Subsection \ref{s: Hopf prelim} and return to $\sym[t]$ and Hall--Littlewood functions in Subsection \ref{s: HL setting}. 

\subsection{Hopf preliminaries}\label{s: Hopf prelim}
Let $H=\bigoplus_n H_n$ be a graded algebra over a field $\field$. Recall that $H$ is a Hopf algebra if there are algebra maps $\Delta\colon H \to H\otimes H$, $\varepsilon\colon H\to\field$, and an algebra antimorphism $S\colon H \to H$, called the \demph{coproduct}, \demph{counit}, and \demph{antipode}, respectively, satisfying some additional compatibility conditions. See \cite{Mont}. 

Let $H^* = \bigoplus_n H_n^*$ denote the graded dual of $H$. If each $H_n$ is finite dimensional, then the pairing 
$
	\innp(\,\cdot\,,\,\cdot\,)\colon H \otimes H^* \to \field
$ 
defined by $\innp(h,a) = a(h)$ is nondegenerate. This pairing naturally endows $H^*$ with a Hopf algebra structure, with product and coproduct uniquely determined by the formulas:
\[
	\innp(h,a\cdot b) := \innp(\Delta(h),a\otimes b) 
\qand
	\innp(g\otimes h,\Delta(a)) := \innp(g\cdot h,a) 
\]
for all homogeneous $g,h\in H$ and $a,b \in H^*$. (Extend to all of $H^*$ by linearity, insisting that $\innp(H_n,H_m^*) = 0$ for $n\neq m$.)

\begin{remark}
The finite dimensionality of $H_n$ ensures that the coproduct in $H^*$ is a finite sum of functionals, $\Delta(a) = \sum_{(a)} a'\otimes a''$. Here and below we use Sweedler's notation for coproducts.
\end{remark} 

We now recall some standard actions (``$\harpoon$'') of $H$ and $H^*$ on each other. Given $h\in H$ and $a\in H^*$, put
\begin{gather}\label{e: harpoon}
	a\harpoon h := \sum_{(h)} \innp(h'',a)h'
	\qand
	h\harpoon a := \sum_{(a)} \innp(h,a'')a'.
\end{gather}
Equivalently, $\innp(g,h\harpoon a) = \innp(g\cdot h,a)$ and $\innp(a\harpoon h,b) = \innp(h,b\cdot a)$. We call these \emph{skew elements} (in $H$ and $H^*$, respectively) to keep the nomenclature consistent with that in symmetric function theory.

Our skew Pieri rules (Theorems \ref{thm: e-Pieri}, \ref{thm: s-Pieri} and \ref{thm: q-Pieri}) come from an elementary formula relating products of elements $h$ and skew elements $a\harpoon g$ in a Hopf algebra $H$:
\begin{equation}\label{e: Hopf skew rule}
	(a\harpoon g) \cdot h = 
	\sum \bigl( S(h'') \harpoon a\bigr) \harpoon (g\cdot h').
\end{equation}
See ($\ast$) in the proof of \cite[Lemma 2.1.4]{Mont} or \cite[Lemma 1]{LLS}. Before turning to the proofs of these theorems, we first recall  the Hopf structure of $\sym[t]$.

\subsection{The Hall--Littlewood setting}\label{s: HL setting} 
The ring $\sym[t]$ is generated by the one-part power sum symmetric functions $p_r$ ($r>0$), so the definitions 
\begin{gather}\label{e: p-Hopf structure}
	\Delta(p_r) := 1\otimes p_r + p_r \otimes 1, \quad
	\varepsilon(p_r) := 0, \qand
	S(p_r) := -p_r 
\end{gather}
completely determine the Hopf structure of $\sym[t]$. 

\begin{proposition} \label{p: esq}
For $r>0$,
\begin{align*}
 \Delta(e_r) &= \sum_{k = 0}^r e_k \otimes e_{r-k} & 
 \Delta(s_r) &= \sum_{k = 0}^r s_k \otimes s_{r-k} &  \Delta(q_r) &= \sum_{k = 0}^r q_k \otimes q_{r-k} & \\
S(e_r) &= (-1)^r s_r & S(s_r) &= (-1)^r e_r  & S(q_r) &= \sum_{\lambda \vdash r} c_\lambda P_\lambda.
\end{align*}
where $c_\lambda$ is given by Lemma \ref{l: omega}. 
\end{proposition}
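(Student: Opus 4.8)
The plan is to verify each of the six formulas in Proposition~\ref{p: esq} by reducing everything to the generating functions in $p_r$ that define the Hopf structure via \eqref{e: p-Hopf structure}, or alternatively to identities already established in Section~\ref{s: prelim}. For the coproduct formulas, the cleanest route is to pass to generating functions: write $E(u)=\sum_r e_r u^r=\exp\bigl(\sum_{k\geq1}(-1)^{k-1}p_k u^k/k\bigr)$, and similarly $H(u)=\sum_r s_r u^r=\exp\bigl(\sum_{k\geq1}p_k u^k/k\bigr)$ for the complete homogeneous functions (here $s_r=h_r$), and $Q(u)=\sum_r q_r u^r=\prod_i (1-tx_iu)/(1-x_iu)=\exp\bigl(\sum_{k\geq1}(1-t^k)p_k u^k/k\bigr)$, the last being the standard generating function for the $q_r$ \cite[Ch.~III, (2.10)]{Mac}. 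Since $\Delta$ is an algebra map and $\Delta(p_k)=1\otimes p_k+p_k\otimes1$ is primitive, $\Delta$ applied to an exponential in the $p_k$ factors: $\Delta(E(u))=E(u)\otimes E(u)$ (interpreting this in $\sym[t][[u]]\otimes\sym[t][[u]]$ with $u$ central), and likewise for $H(u)$ and $Q(u)$. Extracting the coefficient of $u^r$ gives exactly $\Delta(e_r)=\sum_{k=0}^r e_k\otimes e_{r-k}$, $\Delta(s_r)=\sum_{k=0}^r s_k\otimes s_{r-k}$, and $\Delta(q_r)=\sum_{k=0}^r q_k\otimes q_{r-k}$.

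For the antipode formulas, I would use that $S$ is the algebra antimorphism with $S(p_k)=-p_k$, hence $S$ is multiplicative on the commutative ring $\sym[t]$ and $S(F(p_1,p_2,\dotsc))$ is obtained by substituting $p_k\mapsto-p_k$. On generating functions this means $S$ sends $E(u)\mapsto\exp\bigl(-\sum(-1)^{k-1}p_ku^k/k\bigr)$; comparing with the expansion of $H(-u)$ or directly with $H(u)^{-1}$, one gets $S(E(u))=H(u)$ up to the sign substitution, yielding the classical $S(e_r)=(-1)^r s_r$ and dually $S(s_r)=(-1)^r e_r$ (these are of course just the statement that $\omega=(-1)^{\deg}S$ on $\sym$, with $\omega(e_r)=s_r$). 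The only genuinely new identity is $S(q_r)=\sum_{\lambda\vdash r}c_\lambda P_\lambda$. Here I would invoke Lemma~\ref{l: omega}: applying the substitution $p_k\mapsto -p_k$ to $Q(u)=\exp\bigl(\sum(1-t^k)p_ku^k/k\bigr)$ gives $\exp\bigl(-\sum(1-t^k)p_ku^k/k\bigr)$, and one checks this equals $\omega$ applied to $Q(u)$ after the standard sign bookkeeping, because $\omega$ acts on $p_k$ by $p_k\mapsto(-1)^{k-1}p_k$ and the extra $(-1)^{k-1}\cdot(-1)^k=-1$ per part is exactly what converts $(1-t^k)$ generating function data the right way. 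More carefully: since $\omega(p_k)=(-1)^{k-1}p_k$ and $S(p_k)=-p_k=(-1)\cdot p_k$, for any $F$ homogeneous of degree $r$ we have $S(F)=(-1)^r\omega(F)$. Therefore $S(q_r)=(-1)^r\omega(q_r)$, and Lemma~\ref{l: omega} says $\omega(q_r)=(-1)^r\sum_{\lambda\vdash r}c_\lambda(t)P_\lambda$, so $S(q_r)=\sum_{\lambda\vdash r}c_\lambda(t)P_\lambda$ exactly as claimed.

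Concretely I would organize the write-up as: first establish the general fact $S=(-1)^{\deg}\omega$ on $\sym[t]$ from \eqref{e: p-Hopf structure} and $\omega(p_k)=(-1)^{k-1}p_k$; this immediately gives all three antipode formulas, the first two from $\omega(e_r)=s_r$, $\omega(s_r)=e_r$ and the third from Lemma~\ref{l: omega}. Then handle the three coproduct formulas via the primitivity of the $p_k$ and the exponential generating-function expressions for $e_r$, $s_r=h_r$, $q_r$; in each case $\Delta$ of the generating function is the ``tensor square'' of the generating function because $\Delta$ is an algebra map killing cross terms between primitives, and the coefficient of $u^r$ is the stated convolution. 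Alternatively, for $\Delta(q_r)$ one can avoid generating functions entirely and argue from $q_r=\sum_{k}(-t)^k s_{r-k}e_k$ \eqref{qr} together with $\Delta(s_{r-k})$, $\Delta(e_k)$ and a short reindexing, but the generating-function argument is shorter.

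The main obstacle is purely bookkeeping: getting the sign and $t$-power conventions consistent in the $S(q_r)$ computation, i.e.\ correctly relating $S$, $\omega$, and the substitution $p_k\mapsto -p_k$ on the non-symmetric-in-$k$ generating function $Q(u)$. Once the clean statement $S=(-1)^{\deg}\omega$ is in place, that obstacle dissolves and the proof is essentially a citation of Lemma~\ref{l: omega}. A minor secondary point is to make sure the coproduct generating-function manipulations are justified degree by degree (each $\sym[t]_n$ is finite-dimensional, so working in the completed power series ring in $u$ is harmless), but this is standard and needs only a sentence.
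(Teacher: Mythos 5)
Your proof is correct and follows essentially the same route as the paper: the paper simply cites Macdonald for the $e_r$, $s_r$ identities and for the coproduct of $q_r$ (the facts you derive via generating functions and primitivity of the $p_k$), and obtains $S(q_r)$ exactly as you do, from the relation $S=(-1)^{\deg}\omega$ on homogeneous elements together with Lemma~\ref{l: omega}. No gaps.
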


\begin{proof} 
Equalities for $e_r$ and $s_r$ are elementary consequences of \eqref{e: p-Hopf structure} and may be found in \cite[\S I.5, Example 25]{Mac}. The coproduct formula for $q_r$ is (2) in \cite[\S III.5, Example 8]{Mac}. The antipode formula for $q_r$ is identical to Lemma \ref{l: omega}, as the fundamental morphism $\omega$ and the antipode $S$ are related by $S(h) = (-1)^r\omega(h)$ on homogeneous elements $h$ of degree $r$.
\end{proof}

It happens that $\sym[t]$ is self-dual as a Hopf algebra. This may be deduced from Example 8 in \cite[\S III.5]{Mac}, but we illustrate it here in the power sum basis for the reader not versed in Hopf formalism.

\begin{lemma}\label{l: self-dual}
The Hopf algebra $\sym[t]$ is self-dual with the extended Hall scalar product.
\end{lemma}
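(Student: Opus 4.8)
\textbf{Proof proposal for Lemma \ref{l: self-dual}.}

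The plan is to exhibit an explicit graded Hopf algebra isomorphism $\sym[t] \to \sym[t]^*$ and check that it intertwines all the structure maps, which amounts to verifying everything on the algebra generators $p_r$. First I would recall that $\{p_\lambda\}$ is a $\bQ(t)$-basis of $\sym[t]$, so the extended Hall scalar product $\innp(p_\lambda,p_\mu) = z_\mu(t)\,\delta_{\lambda\mu}$ defines a nondegenerate pairing and hence a linear isomorphism $\phi\colon \sym[t]\to\sym[t]^*$ sending $h\mapsto \innp(h,\,\cdot\,)$; equivalently $\phi(p_\lambda) = z_\lambda(t)^{-1}\,\delta_\lambda$ where $\{\delta_\lambda\}$ is the dual basis to $\{p_\lambda\}$. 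What must be shown is that this $\phi$ is a Hopf algebra morphism, i.e.\ that the product, coproduct, unit, counit and antipode that the pairing induces on $\sym[t]^*$ agree (under $\phi$) with those of $\sym[t]$ given by \eqref{e: p-Hopf structure}.

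The central step is a compatibility identity for the power sums: I claim
\[
	\innp(p_\lambda\, p_\mu,\, p_\nu) \;=\; \innp(p_\lambda\otimes p_\mu,\, \Delta(p_\nu))
\]
for all partitions $\lambda,\mu,\nu$. Since $\Delta$ is an algebra map and $\Delta(p_r) = 1\otimes p_r + p_r\otimes 1$, one has $\Delta(p_\nu) = \sum_{A\sqcup B = \{1,\dots,\ell(\nu)\}} p_{\nu_A}\otimes p_{\nu_B}$, the sum over ordered set-partitions of the parts of $\nu$ into two blocks. Pairing against $p_\lambda\otimes p_\mu$ picks out those splittings with $\nu_A = \lambda$ and $\nu_B = \mu$ (as multisets), each contributing $z_\lambda(t)\,z_\mu(t)$; a standard count of such splittings shows the right-hand side equals $z_\lambda(t)\,z_\mu(t)$ times the number of ways to interleave the parts, which is exactly the multinomial factor by which $z_{\lambda\cup\mu}(t)$ differs from $z_\lambda(t)z_\mu(t)$ when $\nu = \lambda\cup\mu$, and $0$ otherwise. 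On the left, $p_\lambda p_\mu = p_{\lambda\cup\mu}$ and $\innp(p_{\lambda\cup\mu},p_\nu) = z_{\lambda\cup\mu}(t)\delta_{\nu,\lambda\cup\mu}$, giving the same value. (The bookkeeping is the $t$-deformed version of the classical self-duality of $\sym$; the factors $(1-t^{\mu_j})^{-1}$ in $z_\mu(t)$ are multiplicative over disjoint unions of parts, so they cause no trouble.) This identity says precisely that $\phi$ carries the product of $\sym[t]$ to the product induced on $\sym[t]^*$ by $\Delta$, and — reading it the other way, using that the pairing is symmetric — that $\phi$ carries $\Delta$ to the coproduct induced on $\sym[t]^*$ by the product. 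Compatibility of units and counits is immediate ($p_\lambda$ pairs to $0$ with $1$ unless $\lambda=\emptyset$, and $\varepsilon(p_r)=0$), and compatibility with the antipode is then automatic, since in any Hopf algebra the antipode is the unique convolution inverse of the identity and is therefore determined by the product, coproduct, unit and counit — or, if one prefers, check directly that $S(p_r) = -p_r$ is dual to itself under $\phi$.

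The main obstacle is purely combinatorial: getting the interleaving/multinomial count exactly right so that the symmetry-factor $z_\mu(t)$ (including the $(1-t^{\mu_j})^{-1}$ corrections) behaves correctly under merging multisets of parts, i.e.\ verifying $z_{\lambda\cup\mu}(t) = \binom{\,m(\lambda\cup\mu)\,}{\,m(\lambda)\,}\, z_\lambda(t)\, z_\mu(t)$ in the appropriate sense of multi-index binomials over part-multiplicities. Everything else is formal. An alternative, essentially equivalent route — which I would mention but not develop — is to quote \cite[\S III.4]{Mac}: the two characterizations $\innp(P_\lambda,Q_\mu)=\delta_{\lambda\mu}$ and $\innp(p_\lambda,p_\mu)=z_\mu(t)\delta_{\lambda\mu}$ together with the known coproduct formulas $\Delta(P_\lambda) = \sum_{\mu\subseteq\lambda} P_{\lambda/\mu}\otimes P_\mu$ and its $Q$-analogue exhibit $\{P_\lambda\}$ and $\{Q_\lambda\}$ as dual bases realizing $\sym[t]\cong\sym[t]^*$ on the nose, from which the Hopf self-duality is read off directly.
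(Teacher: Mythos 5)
Your proposal is correct and follows essentially the same route as the paper: both reduce the check to the power-sum basis, where the whole matter comes down to the multiplicativity identity $z_{\lambda\cup\mu}(t) = \binom{\lambda\cup\mu}{\lambda}\,z_\lambda(t)\,z_\mu(t)$ (the paper's displayed identity for $z_\lambda(t)^{-1}\binom{\lambda}{\mu}$), together with $p_\mu p_\nu = p_{\mu\cup\nu}$ and the multi-binomial coproduct formula for $p_\lambda$. The only differences are cosmetic: you phrase the coproduct expansion via set-splittings of parts rather than directly via part-multiplicity binomials, and you add the (correct, though not strictly needed in the paper) remark that compatibility with the antipode is automatic.
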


\begin{proof}
Write $p^*_\lambda$ for $z_\lambda(t)^{-1} p_\lambda$. It is sufficient to check that 
\[
   \innp(p_\lambda,p^*_\mu \cdot p^*_\nu) = \innp(\Delta(p_\lambda),p^*_\mu \otimes p^*_\nu)
   \qand
   \innp(p_\mu\otimes p_\nu,\Delta(p^*_\lambda{)}) = \innp(p_\mu \cdot p_\nu,p^*_\lambda)
\]
for all partitions $\lambda,\mu$, and $\nu$. 

\smallskip\noindent\emph{Products and coproducts in the power sum basis.}\ 
Given partitions $\lambda = \langle 1^{m_1},2^{m_2},\dotsb\rangle$ and $\mu = \langle 1^{n_1},2^{n_2},\dotsb \rangle$, we write $\lambda\cup\mu$ for the partition $\langle 1^{m_1+n_1},2^{m_2+n_2},\dotsb \rangle$. Also, we write $\mu \leq \lambda$ if $n_i\leq m_i$ for all $i\geq1$. In this case, we define 
\[
  \binom{\lambda}{\mu} = \prod_{i\geq1} \binom{m_i}{n_i} ,
\]
and otherwise define $\binom{\lambda}{\mu} = 0$. Since the power sum basis is multiplicative ($p_\lambda = \prod_{i\geq1} p_{\lambda_i}$), we have
$
    p_\mu \cdot p_\nu = p_{\mu\cup\nu}.
$ 
Since $\Delta$ is an algebra map, 
the first formula in \eqref{e: p-Hopf structure} gives
\[
    \Delta(p_\lambda) = \sum_{\substack{\mu \leq \lambda \\ \mu\cup\nu = \lambda}}
    \binom{\lambda}{\mu} p_\mu \otimes p_\nu \,.
\]

\smallskip\noindent\emph{Products and coproducts in dual basis.}\ 
It is easy to see that
\begin{gather}\label{e: zee}
	z_\lambda(t)^{-1} \cdot \binom{\lambda}{\mu} = 
	z_{\mu}(t)^{-1} \cdot z_{\nu}(t)^{-1} 
\end{gather}
whenever $\nu \cup \mu = \lambda$. Using \eqref{e: zee} and the formulas for product and coproduct in the power sum basis, we deduce that
\[
     p^*_\mu \cdot p^*_\nu = \binom{\mu\cup\nu}{\mu} p^*_{\mu\cup\nu}\,,
\qand
    \Delta(p^*_\lambda) = \sum_{\substack{\mu\leq \lambda \\ \mu\cup\nu = \lambda}} p^*_\mu \otimes p^*_\nu \,.
\]

\smallskip\noindent\emph{Checking the desired identities.} Using the preceding formulas, we get
\[
	\innp(\Delta(p_\lambda) , p^*_\mu \otimes p^*_\nu) \ = \ \binom{\lambda}{\mu} \cdot \delta_{\lambda,\mu\cup\nu}
	\ = \ \innp(p_\lambda , p^*_\mu \cdot p^*_\nu).
\]
and
\[
	\innp(p_\mu \cdot p_\nu,p^*_\lambda) \ = \  \delta_{\lambda,\mu\cup\nu}
	\ = \ \innp(p_\mu\otimes p_\nu,\Delta(p^*_\lambda{)}).
\]
This completes the proof of the lemma.
\end{proof}

After \eqref{e: skew-via-duality}, \eqref{e: harpoon} and Lemma \ref{l: self-dual}, we see that $P_{\lambda/\mu} = Q_\mu\harpoon P_\lambda$ and $Q_{\lambda/\mu} = P_\mu \harpoon Q_\lambda$.

\section{Proofs of the main theorems}\label{s: skew proofs}

We specialize \eqref{e: Hopf skew rule} to Hall--Littlewood polynomials, putting $a\harpoon g = P_{\lambda/\mu}$.

\begin{proof}[Proof of Theorem \ref{thm: e-Pieri}]
Taking $h = e_r$ in \eqref{e: Hopf skew rule}, we get
\begin{align}
\label{e: eskew1}
	P_{\lambda/\mu} \cdot e_r &= \left(Q_{\mu} \harpoon P_\lambda \right) \cdot e_r = \sum_{(e_r)} \left( S({e_r}'')\harpoon Q_\mu\right) \harpoon \bigl(P_\lambda \cdot {e_r}'\bigr) \\[0ex]
\label{e: eskew2}
	&= \sum_{k=0}^r \left( S(e_k)\harpoon Q_\mu\right) \harpoon \bigl(P_\lambda \cdot e_{r-k}\bigr) \\[.25ex]
\label{e: eskew3}
	&= \sum_{k=0}^r (-1)^k \left( s_k \harpoon Q_\mu\right) \harpoon \bigl(P_\lambda \cdot e_{r-k}\bigr) \\[.25ex]
\label{e: eskew4}
	&= \sum_{k=0}^r (-1)^k \biggl( \sum_\tau t^{n(\tau)} Q_{\mu/\tau} \biggr)\harpoon \bigl(P_\lambda \cdot e_{r-k}\bigr) \\[.25ex]
\label{e: eskew5}
	&= \sum_{k=0}^r (-1)^k \Biggl(\sum_{|\mu/\mu^-| = k} \biggl(\sum_\tau t^{n(\tau)} f_{\mu^-,\tau}^{\,\mu}(t)\biggr) Q_{\mu^-}\Biggr) \harpoon \Biggl(\sum_{|\lambda^+/\lambda| = r-k} \vs_{\lambda^+/\lambda}(t) {P_{\lambda^+}} \Biggr)\\[.25ex]
\label{e: eskew6}
	&= \sum_{\lambda^+,\mu^-} (-1)^{|\mu/\mu^-|} \sk_{\mu/\mu^-}(t) \vs_{\lambda^+/\lambda}(t) P_{\lambda^+/\mu^-} \,.
\end{align}

For \eqref{e: eskew2} and \eqref{e: eskew3}, we used Proposition \ref{p: esq}. For \eqref{e: eskew4}, we expanded $s_k$ in the $P$ basis (cf. the proof of Corollary \ref{cor: P.s}) and used the Hopf characterization of skew elements. Explicitly, 
$$
   s_k \harpoon Q_\mu = \biggl(\sum_{\tau \vdash k} t^{n(\tau)} P_\tau\biggr) \harpoon Q_\mu = \sum_{\tau \vdash k} t^{n(\tau)} Q_{\mu/\tau}\,.
$$  
We use \eqref{e: vs} and \eqref{e: skew-via-duality} to pass from \eqref{e: eskew4} to \eqref{e: eskew5}:  the coefficient of $Q_{\mu^-}$ in the expansion of $Q_{\mu/\tau}$ is equal to the coefficient of $P_\mu$ in $P_{\mu^-}P_\tau$. Finally, \eqref{e: eskew6} follows from Corollary \ref{cor: P.s}.
\end{proof}

\begin{proof}[Proof of Theorem \ref{thm: s-Pieri}]

Taking $h = s_r$ in \eqref{e: Hopf skew rule}, we get
\begin{align}
\label{e: sskew1}
	P_{\lambda/\mu} \cdot s_r &= \left(Q_{\mu} \harpoon P_\lambda \right) \cdot s_r = \sum_{(s_r)} \left( S({s_r}'')\harpoon Q_\mu\right) \harpoon \bigl(P_\lambda \cdot {s_r}'\bigr) \\[0ex]
\label{e: sskew2}
	&= \sum_{k=0}^r \left( S(s_k)\harpoon Q_\mu\right) \harpoon \bigl(P_\lambda \cdot s_{r-k}\bigr) \\[.25ex]
\label{e: sskew3}
	&= \sum_{k=0}^r (-1)^k \left( e_k \harpoon Q_\mu\right) \harpoon \bigl(P_\lambda \cdot s_{r-k}\bigr) \\[.25ex]
\label{e: sskew4}
	&= \sum_{k=0}^r (-1)^k Q_{\mu/1^k} \harpoon \bigl(P_\lambda \cdot s_{r-k}\bigr) \\[.25ex]
\label{e: sskew5}
	&= \sum_{k=0}^r (-1)^k \Biggl(\sum_{|\mu/\mu^-| = k} \vs_{\mu/\mu^-}(t) Q_{\mu^-}\Biggr) \harpoon \Biggl(\sum_{|\lambda^+/\lambda| = r-k} \sk_{\lambda^+/\lambda}(t) {P_{\lambda^+}} \Biggr) \\[.25ex]
\label{e: sskew6}
	&= \sum_{\lambda^+,\mu^-} (-1)^{|\mu/\mu^-|} \vs_{\mu/\mu^-}(t) \sk_{\lambda^+/\lambda}(t) P_{\lambda^+/\mu^-} \,.
\end{align}

For \eqref{e: sskew2} and \eqref{e: sskew3}, the proof is the same as above. For \eqref{e: sskew4}, we used $e_k = P_{1^k}$, while for \eqref{e: sskew5}, we used \eqref{e: vs} and \eqref{e: P.s}. Equation \eqref{e: sskew6} is obvious.
\end{proof}

\begin{proof}[Proof of Theorem \ref{thm: q-Pieri}]

We present two proofs. The first is along the lines of the preceding proofs of Theorems \ref{thm: e-Pieri} and \ref{thm: s-Pieri}. Taking $h=s_r$ in \eqref{e: Hopf skew rule}, we get
\begin{align}
\label{e: qskew1}
	P_{\lambda/\mu} \cdot q_r &= \left(Q_{\mu} \harpoon P_\lambda \right) \cdot q_r = \sum_{(q_r)} \left( S({q_r}'')\harpoon Q_\mu\right) \harpoon \bigl(P_\lambda \cdot {q_r}'\bigr) \\[0ex]
\label{e: qskew2}
	&= \sum_{k=0}^r \left( S(q_k)\harpoon Q_\mu\right) \harpoon \bigl(P_\lambda \cdot q_{r-k}\bigr) \\[.25ex]
\label{e: qskew3}
	&= \sum_{k=0}^r \left( \sum_{\tau \vdash k} c_\tau(t) P_\tau \harpoon Q_\mu\right) \harpoon \bigl(P_\lambda \cdot q_{r-k}\bigr) \\[.25ex]
\label{e: qskew4}
	&= \sum_{k=0}^r \left( \sum_{\tau \vdash k} c_\tau(t) Q_{\mu/\tau} \right)\harpoon \bigl(P_\lambda \cdot q_{r-k}\bigr) \\[.25ex]
\label{e: qskew5}
	&= \sum_{k=0}^r \left(\sum_{|\mu/\mu^-| = k} \left(\sum_\tau c_\tau(t) f_{\mu^-,\tau}^{\,\mu}(t)\right) Q_{\mu^-}\right) \harpoon \left(\sum_{|\lambda^+/\lambda| = r-k} \hs_{\lambda^+/\lambda}(t) {P_{\lambda^+}} \right)\\[.5ex]
\label{e: qskew6}
	&= \sum_{\lambda^+,\mu^-} (-1)^{|\mu/\mu^-|} (-t)^{|\tau/\mu^-|} \vs_{\mu/\tau}(t) \sk_{\tau/\mu^-} \hs_{\lambda^+/\lambda}(t) P_{\lambda^+/\mu^-} \,.
\end{align}
The only line that needs a comment is \eqref{e: qskew6}. 

Substitute $y = -1/t$, $\lambda = \mu$, $\mu = \mu^-$ and $\nu = \tau$ into Theorem \ref{thm: y}. We get
$$
	\sum_\tau \vs_{\mu/\tau}(t) \sk_{\tau/\mu^-}(t) (-1/t)^{|\mu/\tau|} = 
	\sum_{\sigma} t^{n(\sigma)-\binom{\ell(\sigma)}2} f_{\tau,\mu^-}^\mu(t) \prod_{j=1}^{\ell(\sigma)} (-1/t+t^{j-1}),
$$
and, after multiplying by $t^{|\mu/\mu^-|}$,
$$
	\sum_\tau (-1)^{|\mu/\tau|} t^{|\tau/\mu^-|} \vs_{\mu/\tau}(t) \sk_{\tau/\mu^-}(t) = 
	\sum_\sigma t^{n(\sigma) - \binom{\ell(\sigma)}2 + |\mu/\mu^-| - \ell(\sigma)} f_{\tau,\mu^-}^\mu(t) \prod_{j=1}^{\ell(\sigma)} (-1+t^{j}).
$$
Now $|\mu/\mu^-| = |\sigma|$ and $n(\sigma) - \binom{\ell(\sigma)}2 + |\sigma| - \ell(\sigma) = \sum_i (\binom{\sigma'_i}2 + \sigma'_i) - \binom{\sigma_1'+1}2 = \sum_{i = 2}^{\sigma_1} \binom{\sigma'_i+1}2$, which shows that
$$
	\sum_\sigma c_\sigma f_{\sigma,\mu^-}^\mu(t) = 
	\sum_\tau (-1)^{|\mu/\tau|} t^{|\tau/\mu^-|} \vs_{\mu/\tau}(t) \sk_{\tau/\mu^-}(t),
$$
with the sum over all $\tau$ satisfying $\mu^- \subseteq \tau \subseteq \mu$. This completes the first proof.

The second proof uses Theorems \ref{thm: P.s}, \ref{thm: e-Pieri} and \ref{thm: s-Pieri}. Recall from \eqref{qr} that $q_r = \sum_{k=0}^r (-t)^k s_{r-k} e_k$. We have
\begin{flalign*}
\phantom{==}P_{\lambda/\mu} \cdot q_r  &= P_{\lambda/\mu} \cdot \left(\sum_{k=0}^r (-t)^k s_{r-k} e_k\right) = \sum_{k=0}^r (-t)^k ( P_{\lambda/\mu} s_{r-k})e_k &\\
	 & = \sum_{k=0}^r (-t)^k  \sum_{\sigma,\tau} (-1)^{|\mu/\tau|}\vs_{\mu/\tau}(t) \sk_{\sigma/\lambda}(t) P_{\sigma/\tau} e_k 
\end{flalign*}
\begin{flalign*}
	 & = \!\!\!\sum_{\sigma,\tau,\mu^-,\lambda^+} \!\!\!\!(-t)^{|\tau/\mu^-|+|\lambda^+/\sigma|} (-1)^{|\mu/\tau| + |\tau/\mu^-|}\vs_{\mu/\tau}(t) \sk_{\sigma/\lambda}(t) \sk_{\tau/\mu^-}(t) \vs_{\lambda^+/\sigma}(t) P_{\lambda^+/\mu^-} \\
	 & = \!\!\!\sum_{\tau,\mu^-,\lambda^+} \!\!\!(-1)^{|\mu/\mu^-|} (-t)^{|\tau/\mu^-|}\vs_{\mu/\tau}(t)\sk_{\tau/\mu^-}(t) \!\left( \sum_\sigma (-t)^{|\lambda^+/\sigma|} \vs_{\lambda^+/\sigma}(t) \sk_{\sigma/\lambda}(t) \!\right) \!P_{\lambda^+/\mu^-} \\
	 & = \!\!\!\sum_{\tau,\mu^-,\lambda^+} \!\!\!(-1)^{|\mu/\mu^-|}(-t)^{|\tau/\mu^-|}\vs_{\mu/\tau}(t)\sk_{\tau/\mu^-}(t) \hs_{\lambda^+/\lambda}(t) P_{\lambda^+/\mu^-},
\end{flalign*}
where we used Lemma \ref{l: hs} in the final step.
\end{proof}

Our final result is on the uniqueness of the expansions.

\begin{theorem} \label{thm: unique}
 Let $a_{\lambda/\mu}(t)$ and $b_{\lambda/\mu}(t)$ be polynomials defined for $\lambda \supseteq \mu$, with $b_{\emptyset/\emptyset}(t) = 1$. For fixed $\lambda\supseteq\mu$ and $r\geq0$, consider the expression
$$
   \mathcal E_{\lambda,\mu,r} \ = \!\sum_{\substack{\lambda^+\supseteq\lambda,\,\mu^-\subseteq\mu\\ |\lambda^+/\lambda| + |\mu/\mu^-| = r}} (-1)^{|\mu/\mu^-|} a_{\lambda^+/\lambda}(t) b_{\mu/\mu^-}(t) P_{\lambda^+/\mu^-}.
$$
1)\ 
If $\mathcal E_{\lambda,\mu,r} = P_{\lambda/\mu} \, s_{1^r}$ $\forall\lambda,\mu,r$ then $a_{\lambda^+/\lambda} = \vs_{\lambda^+/\lambda}$ and $b_{\mu/\mu^-} = \sk_{\mu/\mu^-}$.  \\
2)\ 
If $\mathcal E_{\lambda,\mu,r} = P_{\lambda/\mu} \, s_{r}$ $\forall\lambda,\mu,r$ then $a_{\lambda^+/\lambda} = \sk_{\lambda^+/\lambda}$ and $b_{\mu/\mu^-} = \vs_{\mu/\mu^-}$. \\
3)\ 
If $\mathcal E_{\lambda,\mu,r} = P_{\lambda/\mu} \, q_r$ $\forall\lambda,\mu,r$ then $a_{\lambda^+/\lambda} = \hs_{\lambda^+/\lambda}$ and $b_{\mu/\mu^-} = \sum_{\nu} (-t)^{|\nu/\mu^-|} \vs_{\mu/\nu}\, \sk_{\nu/\mu^-}$.
\end{theorem}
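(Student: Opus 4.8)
The plan is to recover the $a$'s and $b$'s in two stages: first isolate the $a_{\lambda^+/\lambda}(t)$ by specializing to $\mu=\emptyset$, and then isolate the $b_{\mu/\mu^-}(t)$ by induction on $|\mu/\mu^-|$ while letting the outer shape grow. The one nontrivial ingredient — and the place where the structural hypothesis ($\mu^-\subseteq\mu$, $\lambda^+\supseteq\lambda$ in the expansion) is genuinely used — is a linear independence statement for skew Hall--Littlewood functions with a fixed family of inner shapes and a varying outer shape.

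For the first stage I would put $\mu=\emptyset$. The only admissible inner shape is then $\mu^-=\emptyset$, so, using $b_{\emptyset/\emptyset}(t)=1$, the expression collapses to $\mathcal E_{\lambda,\emptyset,r}=\sum_{|\lambda^+/\lambda|=r}a_{\lambda^+/\lambda}(t)\,P_{\lambda^+}$. Comparing this with $P_\lambda\,s_{1^r}$ (resp. $P_\lambda\,s_r$, resp. $P_\lambda\,q_r$), invoking the appropriate Pieri rule — \eqref{e: vs}, resp. Theorem~\ref{thm: P.s}, resp. \eqref{e: hs} — and using that $\{P_{\lambda^+}\}$ is linearly independent over partitions, I would read off $a_{\lambda^+/\lambda}(t)=\vs_{\lambda^+/\lambda}(t)$ (resp. $\sk_{\lambda^+/\lambda}(t)$, resp. $\hs_{\lambda^+/\lambda}(t)$) for every $\lambda^+\supseteq\lambda$. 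Letting $\lambda$ and $r$ vary covers all skew shapes, and in particular $a_{\lambda/\lambda}(t)=1$ in every case.

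For the second stage, with $a$ now fixed, I would argue by strong induction on $n=|\mu/\mu^-|$. For $n=0$ the case $r=0$ of the hypothesis reads $b_{\mu/\mu}(t)\,P_{\lambda/\mu}=\mathcal E_{\lambda,\mu,0}=P_{\lambda/\mu}$ (the degree-zero element on the right is $1$ in all three settings), so $\lambda=\mu$ gives $b_{\mu/\mu}(t)=1$. For the inductive step, fix $\mu$ and take $r=n$: in $\mathcal E_{\lambda,\mu,n}$ the terms with $|\mu/\mu^-|<n$ involve only $a$'s and $b$'s already known (the $a$'s from stage one, the $b$'s from the inductive hypothesis), while the terms with $|\mu/\mu^-|=n$ force $\lambda^+=\lambda$ and hence, since $a_{\lambda/\lambda}=1$, contribute exactly $(-1)^n\sum_{|\mu/\mu^-|=n}b_{\mu/\mu^-}(t)\,P_{\lambda/\mu^-}$. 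Thus for every $\lambda\supseteq\mu$ the element $\sum_{|\mu/\mu^-|=n}b_{\mu/\mu^-}(t)\,P_{\lambda/\mu^-}$ is pinned down by known data, and I would finish by proving: \emph{if $\rho^{(1)},\dots,\rho^{(m)}$ are distinct partitions contained in $\mu$ and $\sum_i d_i(t)\,P_{\lambda/\rho^{(i)}}=0$ for every $\lambda\supseteq\mu$, then all $d_i(t)=0$.} Given this, the induction closes, and comparing with the expansions already established in Theorems~\ref{thm: e-Pieri}--\ref{thm: q-Pieri} identifies $b_{\mu/\mu^-}(t)$ with the asserted polynomial in each case.

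The linear independence claim is the main obstacle, since for a fixed outer shape the skew functions $P_{\lambda/\rho}$ satisfy many relations (already at $t=0$). I would dispatch it by specialization and a rectangle trick: after clearing denominators and dividing by the largest power of $t$ common to all $d_i$, we may assume each $d_i(t)\in\bQ[t]$ with some $d_i(0)\neq0$; since $P_{\lambda/\rho}$ has polynomial $t$-coefficients in the monomial basis and $P_{\lambda/\rho}|_{t=0}=s_{\lambda/\rho}$, setting $t=0$ gives $\sum_i d_i(0)\,s_{\lambda/\rho^{(i)}}=0$ for all $\lambda\supseteq\mu$; finally take $\lambda=(N^M)$, a rectangle containing $\mu$, so that $s_{(N^M)/\rho^{(i)}}=s_{\widehat{\rho^{(i)}}}$ for the complements $\widehat{\rho^{(i)}}$ of $\rho^{(i)}$ in the $M\times N$ box, which are distinct partitions — whence linear independence of Schur functions forces every $d_i(0)=0$, a contradiction.
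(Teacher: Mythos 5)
Your proposal is correct and follows the same two-stage skeleton as the paper's proof: specialize $\mu=\emptyset$ to identify the $a$'s from the genuine basis $\{P_{\lambda^+}\}$, then determine the $b$'s by induction on the size of the inner skew shape, which reduces in both treatments to showing that a relation $\sum_i d_i(t)\,P_{\lambda/\rho^{(i)}}=0$, valid for all $\lambda\supseteq\mu$ with the $\rho^{(i)}\subseteq\mu$ distinct, forces every $d_i=0$. The genuine difference is how this linear-independence step is handled. The paper stays inside $\sym[t]$: for the given $\mu$ it manufactures a pair $\tau\subseteq\sigma$ (a wide rectangle stacked over a translate of $\mu$) so that $P_{\sigma/\tau}$ is a nonzero polynomial multiple of $P_\mu$ alone, and then pairs the relation with $Q_\tau$, using $\langle P_{\sigma/\lambda^-},Q_\tau\rangle=\langle P_{\sigma/\tau},Q_{\lambda^-}\rangle$ to annihilate every term except the one indexed by $\mu$; this isolates each unknown coefficient in one stroke. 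You instead normalize so that some $d_i(0)\neq 0$, specialize $t=0$ (using $P_{\lambda/\rho}|_{t=0}=s_{\lambda/\rho}$), and choose $\lambda$ to be a large rectangle so that the skew Schur functions become straight Schur functions of distinct box-complements. Both devices are sound; yours is arguably more elementary (only the $t=0$ degeneration and the rectangle-complement identity are needed), but it leans on the hypothesis that the coefficients are polynomials (so that factoring out the top common power of $t$ makes sense), whereas the paper's pairing argument is insensitive to that and extracts the single difference $b_{\lambda/\mu}-\sk_{\lambda/\mu}$ directly rather than all coefficients of a fixed size at once. Your assertion that the terms with $|\mu/\mu^-|<n$ cancel against the corresponding terms of Theorems \ref{thm: e-Pieri}--\ref{thm: q-Pieri} is exactly what stage one plus the inductive hypothesis provide, so there is no gap there.
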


\begin{proof}
We prove only the first statement, the others being similar. Suppose that we have
$$
   P_{\lambda/\mu} \, s_{1^r} = \sum_{\lambda^+,\mu^-} 
   (-1)^{|\mu/\mu^-|} a_{\lambda^+/\lambda}(t) b_{\mu/\mu^-}(t) P_{\lambda^+/\mu^-}\,.
$$
If we set $\mu = \emptyset$, we get the expansion of $P_{\lambda} s_{1^r}$ over (non-skew) Hall-Littlewood polynomials, which is, of course, unique. Therefore $a_{\lambda/\mu}(t) \, b_{\emptyset/\emptyset}(t) =a_{\lambda/\mu}(t) = \vs_{\lambda/\mu}(t)$ for all $\lambda\supseteq\mu$. We will prove by induction on $|\lambda/\mu|$ that $b_{\lambda/\mu}(t) = \sk_{\lambda/\mu}(t)$. For $\lambda = \mu$ and $r = 0$, we get $P_{\lambda/\lambda} = b_{\lambda/\lambda}(t) P_{\lambda/\lambda}$, so $b_{\lambda/\lambda}(t) = 1 = \sk_{\lambda/\lambda}(t)$. Suppose that $b_{\lambda/\mu}(t) = \sk_{\lambda/\mu}(t)$ for $|\lambda/\mu| < r$ and that $|\lambda/\mu| = r$. Take
 \begin{align*}
  \sigma &= (\underbrace{\lambda_1 + \mu_1, \ldots, \lambda_1 + \mu_1}_{\ell(\lambda)},\lambda_1 + \mu_1,\lambda_1+\mu_2,\ldots,\lambda_1+\mu_{\ell(\mu)}) \\
  \tau &= (\underbrace{\lambda_1 + \mu_1, \ldots, \lambda_1 + \mu_1}_{\ell(\lambda)},\underbrace{\lambda_1,\ldots,\lambda_1}_{\ell(\mu)}).
 \end{align*}

 Note that $\lambda \subseteq \sigma$. Also, the diagram of $\sigma/\tau$ is a translation of the diagram of $\mu$. That means there is only one LR-sequence $S$ (see \cite[p.\ 185]{Mac}) of shape $\sigma/\tau$, and it has type $\mu$. This implies that $f^\sigma_{\tau,\mu} = f_S(t)$, $f^\sigma_{\tau,\mu'} = 0$ for $\mu \neq \mu'$ (see \cite[pp.\ 194 and 218]{Mac}). Therefore $P_{\sigma/\tau}$ is a non-zero polynomial multiple of $P_\mu$.
 Now 
 \begin{align*}
 P_{\sigma/\lambda} \, s_{1^r} &= \sum_{\sigma^+,\lambda^-} (-1)^{|\lambda/\lambda^-|} a_{\sigma^+/\sigma}(t) b_{\lambda/\lambda^-}(t) P_{\sigma^+/\lambda^-} \\
 &= \sum_{\sigma^+,\lambda^-} (-1)^{|\lambda/\lambda^-|} \vs_{\sigma^+/\sigma}(t) b_{\lambda/\lambda^-}(t) P_{\sigma^+/\lambda^-} \\
 &= \sum_{\sigma^+,\lambda^-} (-1)^{|\lambda/\lambda^-|} \vs_{\sigma^+/\sigma}(t) \sk_{\lambda/\lambda^-}(t) P_{\sigma^+/\lambda^-},
 \end{align*}
 where we used Theorem \ref{thm: e-Pieri}. By the induction hypothesis, $b_{\lambda/\lambda^-}(t) = \sk_{\lambda/\lambda^-}(t)$ if $|\lambda/\lambda^-| < r$. After cancellations, we get
 $$\sum_{\lambda^-} (-1)^{|\lambda/\lambda^-|} (b_{\lambda/\lambda^-}(t) - \sk_{\lambda/\lambda^-}(t)) P_{\sigma/\lambda^-} = 0,$$
 where the sum on the left is over all $\lambda^- \subseteq \lambda$ such that $|\lambda/\lambda^-| = r$. Now take scalar product with $Q_\tau$. Since $\langle P_{\sigma/\lambda^-},Q_\tau \rangle = \langle P_\sigma, Q_{\lambda^-}Q_\tau \rangle = \langle P_{\sigma/\tau},Q_{\lambda^-} \rangle $ is the coefficient of $P_{\lambda^-}$ in $P_{\sigma/\tau}$, we see that $(-1)^{|\lambda/\mu|} (b_{\lambda/\mu}(t) - \sk_{\lambda/\mu}(t)) = 0$. That is, $b_{\lambda/\mu}(t) = \sk_{\lambda/\mu}(t)$. 
\end{proof}

\begin{remark}
Similar proofs show that the expansions of $s_{\lambda/\mu} s_{1^r}$, $s_{\lambda/\mu} s_r$ and $s_{\lambda/\mu} P_r$ in terms of skew Schur functions are also unique in the sense of Theorem \ref{thm: unique}, a fact that was not noted in either \cite{AM} or \cite{K}.
\end{remark}

\begin{remark}
It would be preferable to have a simpler expression for the polynomial
 \begin{equation} \label{e: d1}
  b_{\lambda/\mu}(t)  = \sum_{\nu} (-t)^{|\nu/\mu|}\vs_{\lambda/\nu}(t)\sk_{\nu/\mu}(t) 
 \end{equation}
from Theorems \ref{thm: q-Pieri} and \ref{thm: unique}(3), i.e., one involving only the boxes of $\lambda/\mu$ in the spirit of $\hs_{\lambda/\mu}(t)$, so that we could write
$$
 	P_{\lambda/\mu} \cdot q_r =
	\sum_{\lambda^+,\mu^-} {(-1)^{|\mu/\mu^-|}} \hs_{\lambda^+/\lambda}(t) b_{\mu/\mu^-}(t)\, P_{\lambda^+/\mu^-}\,,
$$ 
where the sum is over all $\lambda^+ \supseteq \lambda$, $\mu^-\subseteq \mu$ such that $|\lambda^+/\lambda| + |\mu/\mu^-| = r$. 

Toward this goal, we point out a hidden symmetry in the polynomials $b_{\lambda/\mu}$(t). Writing $q_r$ as $\sum_{k=0}^r (-t)^ke_k s_{r-k}$ before running through the second proof of Theorem \ref{thm: q-Pieri} (i.e., applying Theorems \ref{thm: e-Pieri} and \ref{thm: s-Pieri} in the reverse order) reveals
\begin{equation}\label{e: d2}
	b_{\lambda/\mu}(t) = \sum_{\nu} (-t)^{|\lambda/\nu|} \sk_{\lambda/\nu}(t)\, \vs_{\nu/\mu}(t)\,.
\end{equation}
%
Further toward this goal, 
note how similar \eqref{e: d1} is to the sum in Lemma \ref{l: hs}, which reduces to the tidy product of polynomials $\hs_{\lambda/\mu}(t)$. 
 
Basic computations suggest some hint of a polynomial-product description for $b_{\lambda/\mu}(t)$,
\begin{align*}
  \skewshape[6]{4,3,3,1}{3,2,2,1} \ \, 
  	:& \ \  -(t-1)^2 (t+1) \left(t^3+t^2+t-1\right) \\[.25ex]
  \skewshape[6]{4,3,3,2}{3,2,2,1} \ \, 
  	:& \ \ (t-1)^2 (t+1) \left(t^3+t^2+t-1\right)^2 \\[.25ex]
  \skewshape[6]{5,3,3,2}{3,2,2,1} \ \, 
  	:& \ \ t (t-1)^2 (t+1) \left(t^3+t^2+t-1\right)^2 \\[.25ex]
  \skewshape[6]{5,3,3,2,1}{3,2,2,1} \ \, 
  	:& \ \ t (t-1)^2 (t+1) \left(t^2+t-1\right) \left(t^3+t^2+t-1\right)^2 , 
\intertext{but others suggest that such a description will not be tidy,}
  \skewshape[6]{5,3,3,2,2}{3,2,2,1} \ \, 
  	:& \ \ -t^2 (t-1)^2 (t+1)^2 \left(t^3+t^2+t-1\right) \left(t^7+t^6+2 t^5-t^3-2   t^2-t+1\right) .
\end{align*}
We leave a concise description of the $b_{\lambda/\mu}(t)$ as an open problem. 
\end{remark}

\nocite{Mont,St}
\bibliographystyle{plain}  
\def\cprime{$'$}

\end{document}